\newtheorem{theorem}{Theorem}[section] 
\newtheorem{lemma}[theorem]{Lemma}
\newtheorem{corollary}[theorem]{Corollary}
\theoremstyle{definition}
\newtheorem{definition}{Definition}
\newtheorem{proposition}{Proposition}
\def\th@plain{%
  \upshape 
}
\newcommand{\etal}{et~al.\ }
\newcommand{\ie}{i.e.,\ }
\newcommand{\ee}{\mathrm{EE}}
\renewcommand{\oe}{\mathrm{OE}}
\renewenvironment{proof}[1][\proofname]{\par
  \pushQED{\qed}%
  \normalfont \topsep6\p@\@plus6\p@\relax
  \trivlist
  \item[\hskip\labelsep
        \bfseries
    #1\@addpunct{.}]\ignorespaces
}{%
  \popQED\endtrivlist\@endpefalse
}
\crefname{claim}{Claim}{Claims}
\tikzset{
  on each segment/.style={
    decorate,
    decoration={
      show path construction,
      moveto code={},
      lineto code={
        \path [#1] (\tikzinputsegmentfirst) -- (\tikzinputsegmentlast);
      },
      curveto code={
        \path [#1] (\tikzinputsegmentfirst)
        .. controls (\tikzinputsegmentsupporta) and (\tikzinputsegmentsupportb) ..
        (\tikzinputsegmentlast);
      },
      closepath code={
        \path [#1] (\tikzinputsegmentfirst) -- (\tikzinputsegmentlast);
      },
    },
  },
  mid arrow/.style={postaction={decorate,decoration={
        markings,
        mark=at position .55 with {\arrow[#1]{stealth}}
      }}},
}
\begin{document}

\title{Toroidal graphs without $K_{5}^{-}$ and 6-cycles}
\author{Ping Chen\footnote{School of Mathematics and Statistics, Henan University, Kaifeng, 475004, P. R. China} \and Tao Wang\footnote{Center for Applied Mathematics, Henan University, Kaifeng, 475004, P. R. China. \tt Corresponding author: wangtao@henu.edu.cn; https://orcid.org/0000-0001-9732-1617}}
\date{}
\maketitle
\begin{abstract}
Cai \etal proved that a toroidal graph $G$ without $6$-cycles is $5$-choosable, and proposed the conjecture that $\textsf{ch}(G) = 5$ if and only if $G$ contains a $K_{5}$ [J. Graph Theory 65 (2010) 1--15], where $\textsf{ch}(G)$ is the choice number of $G$. However, Choi later disproved this conjecture, and proved that toroidal graphs without $K_{5}^{-}$ (a $K_{5}$ missing one edge) and $6$-cycles are $4$-choosable [J. Graph Theory 85 (2017) 172--186]. In this paper, we provide a structural description, for toroidal graphs without $K_{5}^{-}$ and $6$-cycles. Using this structural description, we strengthen Choi's result in two ways: (I) we prove that such graphs have weak degeneracy at most three (nearly $3$-degenerate), and hence their DP-paint numbers and DP-chromatic numbers are at most four; (II) we prove that such graphs have Alon-Tarsi numbers at most $4$. Furthermore, all of our results are sharp in some sense. 

Keywords: Toroidal graphs; Weak degeneracy; Alon-Tarsi number; DP-coloring 

MSC2020: 05C15
\end{abstract}

\section{Introduction}
A \emph{$k$-list assignment} of a graph $G$ is a mapping $L$ that assigns a list $L(v)$ of $k$ admissible colors to each vertex $v$ in $G$. An \emph{$L$-coloring} of $G$ is a proper coloring $\phi$ of $G$ such that $\phi(v) \in L(v)$ for all $v\in V(G)$. A graph $G$ is \emph{$k$-choosable} if $G$ admits an $L$-coloring for each $k$-list assignment $L$. The \emph{choice number}, or \emph{list chromatic number}, $\textsf{ch}(G)$ is the smallest integer $k$ such that $G$ is $k$-choosable.

Thomassen \cite{MR1290638} proved that every planar graph is $5$-choosable, and Voigt \cite{MR1235909} constructed a planar graph that is not $4$-choosable. B\"{o}hme \etal \cite{MR1722227} proved that every toroidal graph is $7$-choosable, and a toroidal graph $G$ has $\textsf{ch}(G) = 7$ if and only if $K_{7} \subseteq G$. 

Cai \etal \cite{MR2682511} investigated the choosability of toroidal graphs without short cycles. They also conjectured that every toroidal graph without $K_{5}$ and $6$-cycles is $4$-choosable, but Choi \cite{MR3634481} disproved this conjecture, and proved a weak version of it. A $K_{5}^{-}$ is a $K_{5}$ missing one edge.

\begin{theorem}[Choi \cite{MR3634481}]
Every toroidal graph without $K_{5}^{-}$ and $6$-cycles is $4$-choosable. 
\end{theorem}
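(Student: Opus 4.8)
The plan is to argue by contradiction through a minimal counterexample combined with a discharging argument tailored to the torus. Suppose the statement fails and let $G$ be a counterexample minimizing $|V(G)|$, and fix a $4$-list assignment $L$ that admits no $L$-coloring. We may assume $G$ is connected and $2$-cell embedded on the torus: a non-$2$-cell toroidal embedding would realize $G$ on a surface of smaller Euler genus (the sphere or the projective plane), where Euler's formula yields a \emph{negative} total charge and the analogous discharging closes immediately, or where the corresponding coloring result is already known. By minimality every proper subgraph of $G$ is $L$-colorable, and this is the engine driving all reducibility.

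First I would assemble a catalogue of \emph{reducible configurations}. The basic fact is $\delta(G) \ge 4$: a vertex $v$ of degree at most $3$ can be deleted, the rest $L$-colored by minimality, and $v$ then colored from its list since at most $3$ colors are forbidden. Standard list-coloring extension arguments — removing a small piece, precoloring its boundary, and counting the colors still available — then rule out a larger family of sparse low-degree configurations, for instance degree-$4$ vertices with too-sparse neighborhoods, or short chains of degree-$4$ and degree-$5$ vertices incident to triangles. Second, I would translate the two forbidden subgraphs into constraints on the local face structure. The absence of $6$-cycles is extremely rigid: there are no $6$-faces, no two $4$-faces may share an edge (their union would bound a $6$-cycle), and a $3$-face and a $5$-face cannot share an edge for the same reason. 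The absence of $K_5^{-}$ bounds how many triangles can cluster around a single edge or vertex. Jointly these pin down exactly which faces can surround a given vertex or edge, and in particular they control the adjacencies among $3$-faces, which are the only faces carrying negative charge.

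Third comes the discharging. On the torus Euler's formula reads $|V| - |E| + |F| = 0$, so with the initial charge $\mu(x) = d(x) - 4$ assigned to every vertex and every face $x$ one computes $\sum_x \mu(x) = 4|E| - 4(|V| + |F|) = 0$ exactly. After the reducibility step only $3$-faces are negative (faces of length $4$ and vertices of degree $4$ are neutral; larger faces and higher-degree vertices are positive). I would design rules that push charge from high-degree vertices and large faces into the incident $3$-faces, using the structural lemmas above to certify that each $3$-face collects enough to reach nonnegative charge, so that every vertex and every face ends with final charge at least $0$.

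The main obstacle is the endgame, which is genuinely harder on the torus than in the plane. Because the total charge is $0$ rather than negative, merely proving that every final charge is nonnegative is \emph{not} yet a contradiction; it forces every vertex and every face to end at exactly $0$. This tightness pins the global structure of $G$ down to a short list of highly symmetric near-triangulations, and the real work is to show that each surviving structure either still contains one of the reducible configurations or can be $L$-colored directly, contradicting the choice of $G$. Verifying that the reducible configurations together with the discharging rules exhaust all of these tight cases is where the bulk of the case analysis resides, and it is the step I expect to be the most delicate.
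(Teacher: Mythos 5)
Your overall architecture --- minimal counterexample, list-coloring reducibility, a charge function summing to exactly zero on the torus, and an equality-case endgame --- is essentially the route of Choi's original paper, not of this one: here the theorem is derived as a corollary of the stronger structural result (\cref{SR}, proved by discharging with charges $d(v)-6$ on vertices and $2d(f)-6$ on faces), whose three configurations are then shown reducible in the weak-degeneracy framework (\cref{WD}) so that $4$-choosability follows from $\textsf{ch}(G)\leq \textsf{wd}(G)+1$. Your observation that nonnegativity of all final charges is not yet a contradiction on the torus is exactly right, and the paper closes its own discharging the same way, by exhibiting a vertex of strictly positive final charge through an analysis of the equality cases.

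There is, however, a genuine gap at the foundation of your structural step: you conflate $6$-cycles with $6$-faces, and adjacency of faces with \emph{normal} adjacency. On the torus the boundary walk of a face need not be a cycle, so ``no $6$-cycles'' does not give ``no $6$-faces'': $6$-faces do occur, and their boundaries consist of two triangles (\cref{Lem1}\ref{6F}); indeed the special vertices around which the paper's discharging is organized are $(3,3,4,6^{+})$-vertices, and rule \ref{R3} must serve $6$-faces. Likewise, two $4$-faces \emph{can} share an edge --- only normally adjacent $4$-faces are excluded, and adjacent $4$-faces are forced to share a second vertex (\cref{Lem1}\ref{4FVF4}) --- and a $3$-face \emph{can} be adjacent to a $5$-face, with the vertex identification $x=x_{3}$ forced (\cref{Lem1}\ref{3FVF5}). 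Since your plan ``pins down exactly which faces can surround a given vertex'' from these three false exclusions, the catalogue of local structures around $3$-faces on which your rules and nonnegativity checks would rest is wrong from the start: the degenerate adjacencies you have assumed away (a $6$-face made of two triangles, a $5$-face wrapping back onto a triangle, edge-sharing $4$-faces) are precisely the configurations that the bulk of \cref{Lem1} exists to control, and a discharging scheme that never feeds them cannot be certified. Before step three can proceed, the face-structure claims must be restated in their correct, weaker forms and the rules redesigned around them; the remaining plan (the $\mu(x)=d(x)-4$ normalization, the reducibility catalogue, the tight-case endgame) is plausible in outline but is carried entirely by those unverified local lemmas.
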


Dvo\v{r}\'{a}k and Postle \cite{MR3758240} introduced the concept of DP-coloring, which is a generalization of list coloring. They observed that every $k$-DP-colorable graph is also a $k$-choosable graph. However, it is not known whether toroidal graphs without $K_{5}^{-}$ and $6$-cycles are 4-DP-colorable. In this paper, we focus on this class of graphs, and provide the following structural result that can be used to answer this question positively. A \emph{configuration} is a subgraph possibly with some degree restrictions in the host graph.

\begin{theorem}\label{SR}
Let $G$ be a connected toroidal graph without $K_{5}^{-}$ and $6$-cycles. Then one of the following holds:
\begin{enumerate}
\item The minimum degree is at most three.
\item There is an induced subgraph isomorphic to the configuration in \cref{K5--}.
\item There is an induced subgraph isomorphic to the configuration in \cref{Kite}.
\item There is an induced subgraph isomorphic to the configuration in \cref{House}. 
\end{enumerate}
\end{theorem}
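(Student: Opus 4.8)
The plan is to argue by contradiction via a discharging argument on a torus embedding. Suppose $G$ is a counterexample: a connected toroidal graph with no $K_5^{-}$, no $6$-cycle, minimum degree $\delta(G) \ge 4$, and containing none of the configurations of \cref{K5--,Kite,House} as an induced subgraph. I would first reduce to a well-behaved embedding: taking an edge-minimal counterexample (or working within a block), I would argue that $G$ may be assumed $2$-connected and $2$-cell embedded, so that every face is bounded by a cycle and facial lengths are well defined, disposing of cut vertices and low-connectivity separations by the minimum-degree clause. Applying Euler's formula for the torus, $|V| - |E| + |F| = 0$, and assigning charges $\mu(v) = d(v) - 4$ to each vertex $v$ and $\mu(f) = \ell(f) - 4$ to each face $f$ (with $\ell(f)$ the facial length), the handshake identities give $\sum_v \mu(v) + \sum_f \mu(f) = 0$. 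Since $\delta(G) \ge 4$, every vertex has nonnegative charge, so the entire deficit is concentrated on the triangular faces, each carrying charge $-1$.

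The next step is to extract the local consequences of the two global hypotheses. From the absence of $6$-cycles I would record which facial adjacencies are impossible: two faces sharing exactly one edge whose lengths sum to $8$ contribute a $6$-cycle, so no two $4$-faces are edge-adjacent and no $3$-face is edge-adjacent to a $5$-face (after verifying the resulting boundary is a genuine simple $6$-cycle). Consequently the faces neighbouring a triangle are triangles, $4$-faces, or faces of length at least $7$, and the last are the only \emph{positive} faces that can abut a triangle. From the absence of $K_5^{-}$ together with the three forbidden induced configurations I would then derive degree bounds on the vertices surrounding a triangle; in particular I expect to show that a \emph{poor} triangle (all three vertices of degree $4$) must, unless it creates a $K_5^{-}$ or one of \cref{K5--,Kite,House}, be incident to a vertex of degree at least $5$ or to a face of length at least $7$. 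The named configurations are precisely the obstructions that would otherwise trap a poor triangle among triangles and $4$-faces spanned by degree-$4$ vertices.

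With these lemmas in hand I would set discharging rules moving charge to the triangles from their only available sources: each vertex of degree $d \ge 5$ sends a fixed amount to each incident triangle, and each face of length $\ell \ge 7$ sends charge across each shared edge to an adjacent triangle. One then checks that after discharging every vertex retains nonnegative charge (a degree-$d$ vertex sends away at most $d - 4$), every large face retains nonnegative charge ($\ell - 4$ split among at most $\ell$ incident triangles), and, using the structural lemmas and the exclusion of the three configurations, every triangle reaches charge at least $0$. Because the total charge is exactly $0$, equality must hold everywhere; I would then show this rigidity is untenable, either because some donor is forced to a strictly positive final charge (an immediate contradiction), or because the tight case forces a quadrangulation in which two $4$-faces share an edge, producing a forbidden $6$-cycle.

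The main obstacle, and the bulk of the work, is the case analysis underlying the structural lemmas: classifying the neighbourhoods of a poor triangle and of a degree-$4$ vertex lying on several triangles, and verifying in each subcase that avoiding $K_5^{-}$ and the three configurations guarantees an external source of charge. The delicate points are confirming that the candidate $6$-cycles arising from facial adjacencies are genuinely simple (handling coincidences of vertices and multiply shared edges), and calibrating how much charge each degree-$\ge 5$ vertex and each $\ge 7$-face must send so that no donor goes negative while every triangle is saved; these two demands pull in opposite directions and must balance exactly, which is where the precise degree restrictions in \cref{K5--,Kite,House} are needed.
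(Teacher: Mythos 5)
Your overall strategy matches the paper's: assume a counterexample with minimum degree at least four, embed on the torus, apply Euler's formula with total charge zero, and in the endgame exploit the rigidity forced by zero total charge (the paper likewise proves all final charges nonnegative and then shows that an everywhere-tight outcome would force a $4$-regular graph containing one of the forbidden configurations). However, your plan rests on a false structural claim. From the absence of $6$-cycles you conclude that ``no two $4$-faces are edge-adjacent and no $3$-face is edge-adjacent to a $5$-face,'' and hence that the faces neighbouring a triangle have length $3$, $4$, or at least $7$. Only \emph{normal} adjacency is excluded: face boundaries on the torus need not be simple, and the coincidences you defer as ``delicate points'' actually occur and dominate the case analysis. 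Indeed, the paper's \cref{Lem1}\ref{3FVF5} shows a $3$-face can be adjacent to a $5$-face provided the triangle's third vertex coincides with the opposite vertex of the pentagon; \cref{Lem1}\ref{4FVF4} shows two $4$-faces can be adjacent (just not normally); and \cref{Lem1}\ref{6F} shows $6$-faces exist (with boundary consisting of two triangles) even though $6$-cycles do not. Your discharging plan sends charge to triangles only from $5^{+}$-vertices and $7^{+}$-faces, so these unexcluded configurations --- triangles abutting $5$-faces, chains of adjacent $4$-faces, $6$-faces --- are never accounted for, and the case analysis you sketch would not close.

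There is also a quantitative problem with your charge assignment $\mu(v)=d(v)-4$, $\mu(f)=\ell(f)-4$. A $7^{+}$-face can donate at most $(\ell-4)/\ell < 1$ per shared edge, so a triangle whose three vertices all have degree $4$ and which meets only one $7^{+}$-face receives at most $3/7 < 1$, and degree-$4$ vertices carry zero charge and (in your rules) cannot relay. The paper avoids this by using $\omega(v)=d(v)-6$, $\omega(f)=2d(f)-6$ (so a $7$-face holds charge $8$ and can send at least $8/7$ per incident vertex), by classifying $4$-vertices into bad, special, and good types, and by explicit relay rules in which faces send charge to $4$-vertices and good $4$-vertices forward their surplus to adjacent bad ones --- all supported by the eighteen structural statements of \cref{Lem1} (e.g.\ no $(3,3,3,3,k,\dots)$-vertices, no adjacent bad vertices, $m_{6^{+}}(v)\ge n_{4b}(v)$). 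Your proposal correctly identifies the endgame (tightness plus the three configurations of \cref{K5--,Kite,House} yields the contradiction, as in the paper's analysis of the equality cases), but without correcting the facial-adjacency claim and introducing some relaying mechanism among $4$-vertices, the verification that every triangle, and every bad $4$-vertex, reaches nonnegative charge would fail.
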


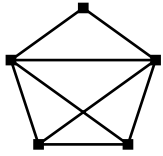
\begin{figure}%
\centering
\begin{tikzpicture}[line width = 1pt]
\def\s{1}
\foreach \ang in {1, 2, 3, 4, 5}
{
\def\pointname{v\ang}
\coordinate (\pointname) at ($(\ang*360/5-54:\s)$);}
\draw (v1)--(v2)--(v3)--(v4)--(v5)--cycle;
\draw (v5)--(v3)--(v1)--(v4);
\foreach \ang in {1, 2, 3, 4, 5}
{
\node[rectangle, inner sep = 1.5, fill = black, draw] () at (v\ang) {};
}
\end{tikzpicture}
\caption{A configuration. Here and in all figures below, a solid quadrilateral represents a $4$-vertex.}
\label{K5--}
\end{figure}

\begin{figure}%
\centering
\begin{tikzpicture}[line width = 1pt]%
\def\s{1}
\coordinate (E) at (\s, 0);
\coordinate (N) at (0, \s);
\coordinate (W) at (-\s, 0);
\coordinate (S) at (0, -\s);
\draw (N)--(W)--(S)--(E)--cycle;
\draw (N)--(S);
\node[rectangle, inner sep = 1.5, fill, draw] () at (E) {};
\node[rectangle, inner sep = 1.5, fill, draw] () at (N) {};
\node[rectangle, inner sep = 1.5, fill, draw] () at (W) {};
\node[rectangle, inner sep = 1.5, fill, draw] () at (S) {};
\end{tikzpicture}
\caption{Kite graph.}
\label{Kite}
\end{figure}
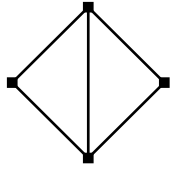

\begin{figure}%
\centering
\begin{tikzpicture}[line width = 1pt]
\def\s{1}
\coordinate (A) at (45:\s);
\coordinate (B) at (135:\s);
\coordinate (C) at (225:\s);
\coordinate (D) at (-45:\s);
\coordinate (H) at (90:1.414*\s);
\draw (A)--(H)--(B)--(C)--(D)--cycle;
\draw (A)--(B);
\foreach \i in {A,B,C,D,H}{
            \node[rectangle, inner sep = 1.5, fill, draw] () at (\i) {};
}
\end{tikzpicture}
\caption{House graph.}
\label{House}
\end{figure}
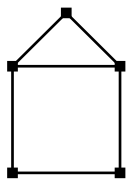

Using this structural description, we consider two graph parameters, \emph{weak degeneracy} and \emph{Alon-Tarsi number}.

In a greedy algorithm, when a vertex $u$ is colored with $\alpha$, we should remove the colors conflict with $\alpha$ from the lists of colors available to the neighbors of $u$. Then the list size of each neighbor of $u$ may decrease by 1. If throughout the coloring process, each vertex has at least one available color when it needs to be colored, then we can obtain a proper (DP-) coloring. If at some steps, one can ``save'' some colors for a neighbor of $u$, we may obtain a better upper bound for the (DP-) chromatic number. This idea was used by Bernshteyn and Lee \cite{MR4606413} to define a notion of \emph{weak degeneracy}.

\begin{definition}[\textsf{Delete} operation]
Let $G$ be a graph and $f : V(G) \longrightarrow \mathbb{N}$ be a function. For a vertex $u \in V(G)$, the operation \textsf{Delete}$(G, f, u)$ outputs the graph $G' = G - u$ and the function $f': V(G') \longrightarrow \mathbb{Z}$ given by 
\begin{align*}
f'(v) \coloneqq
&\begin{cases}
f(v) - 1, & \text{if $uv \in E(G)$};\\[0.3cm]
f(v), & \text{otherwise}.
\end{cases}
\end{align*}
An application of the operation \textsf{Delete} is \emph{legal} if the resulting function $f'$ is nonnegative. 
\end{definition}

\begin{definition}[\textsf{DeleteSave} operation]
Let $G$ be a graph and $f : V(G) \longrightarrow \mathbb{N}$ be a function. For a pair of adjacent vertices $u, w \in V(G)$, the operation \textsf{DeleteSave}$(G, f, u, w)$ outputs the graph $G' = G - u$ and the function $f' : V(G') \longrightarrow \mathbb{Z}$ given by
\begin{align*}
f'(v) \coloneqq
&\begin{cases}
f(v) - 1, & \text{if $uv \in E(G)$ and $v \neq w$};\\[0.3cm]
f(v), & \text{otherwise}.
\end{cases}
\end{align*}
An application of the operation \textsf{DeleteSave} is \emph{legal} if $f(u) > f(w)$ and the resulting function $f'$ is nonnegative. 
\end{definition}

A graph $G$ is \emph{weakly $f$-degenerate} if it is possible to remove all vertices from $G$ by a sequence of legal applications of the operations \textsf{Delete} or \textsf{DeleteSave}. A graph is \emph{$f$-degenerate} if it is weakly $f$-degenerate with no the \textsf{DeleteSave} operation. Given a nonnegative integer $d$, we say that $G$ is \emph{weakly $d$-degenerate} if it is weakly $f$-degenerate with respect to the constant function of value $d$. We say that $G$ is \emph{$d$-degenerate} if it is $f$-degenerate with respect to the constant function of value $d$. The \emph{weak degeneracy} of $G$, denote by $\textsf{wd}(G)$, is the minimum integer $d$ such that $G$ is weakly $d$-degenerate. The \emph{degeneracy} of $G$, denote by $\textsf{d}(G)$, is the minimum integer $d$ such that $G$ is $d$-degenerate. 

Bernshteyn and Lee \cite{MR4606413} provided the following inequalities on several graph parameters. 

\begin{proposition}\label{prop}
For any graph $G$, we always have 
\begin{equation*}
\chi(G) \leq \textsf{ch}(G) \leq \chi_{\textsf{DP}}(G) \leq \chi_{\textsf{DPP}}(G) \leq \textsf{wd}(G) + 1 \leq \textsf{d}(G) + 1, 
\end{equation*}
where $\chi_{\textsf{DP}}(G)$ is the DP-chromatic number of $G$, and $\chi_{\textsf{DPP}}(G)$ is the DP-paint number of $G$. 
\end{proposition}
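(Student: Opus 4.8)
The plan is to establish the six-term chain by verifying each of the five inequalities separately, since they involve genuinely different coloring and game notions. I would group them into the two definitional endpoints, the two ``generalization'' steps in the middle, and the single substantive inequality linking weak degeneracy to DP-paintability.

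First, the outer inequalities are immediate. For $\chi(G) \leq \textsf{ch}(G)$, apply the constant list assignment $L(v) = \{1, \dots, k\}$ to a $k$-choosable graph; any $L$-coloring is then a proper $k$-coloring. For $\textsf{wd}(G) \leq \textsf{d}(G)$, I would observe that an ordinary $d$-degeneracy elimination order is itself a legal weak $d$-degeneracy sequence that happens to use only the \textsf{Delete} operation; since \textsf{DeleteSave} is merely an additional permitted move, allowing it cannot raise the minimum feasible budget.

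Next, the two middle inequalities record that each notion generalizes the previous one. For $\textsf{ch}(G) \leq \chi_{\textsf{DP}}(G)$, from a $k$-list assignment $L$ I would build the identity DP-cover whose fiber over $v$ is a copy of $L(v)$ and whose matching between adjacent $u, v$ joins the two copies of each common color; an independent transversal of this cover is exactly an $L$-coloring, so $k$-DP-colorability forces $k$-choosability. For $\chi_{\textsf{DP}}(G) \leq \chi_{\textsf{DPP}}(G)$, I would note that static DP-coloring is the offline special case of the online DP-painting game in which the whole cover is revealed in a single round, so a Painter strategy winning the online game in particular wins when all information is disclosed at once.

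The heart of the proposition is $\chi_{\textsf{DPP}}(G) \leq \textsf{wd}(G) + 1$: if $G$ is weakly $f$-degenerate, then the Painter wins the DP-painting game with budget $f + 1$. I would prove this by induction on $|V(G)|$, following the removal sequence guaranteed by weak $f$-degeneracy. Fixing the first legal operation, which produces $(G', f')$, the inductive hypothesis supplies a winning Painter strategy on $G'$ with budget $f' + 1$, and the task is to lift it to a strategy on $G$. In the \textsf{Delete}$(G, f, u)$ case this is relatively clean: Painter runs the $G'$-strategy on the remaining vertices and colors $u$ when its fiber is forced, using that $u$'s budget $f(u) + 1$ exceeds the number of already-colored neighbors that can obstruct it. The delicate case is \textsf{DeleteSave}$(G, f, u, w)$, where the hypothesis $f(u) > f(w)$ must be converted into an online accounting argument: because $u$ carries strictly more budget than $w$, Painter can defer its commitment on $w$'s token whenever the constraint linking $u$ and $w$ is active, thereby saving a unit for $w$ exactly as the operation prescribes, while $u$'s surplus guarantees $u$ still finishes. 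Synchronizing the rounds of the $G$-game with those simulated on $G'$, and checking that no budget ever drops below zero in either the saved or the unsaved branch, is the step I expect to be the main obstacle, since it is the only place where the game dynamics, rather than a one-line definitional unwinding, do any real work.
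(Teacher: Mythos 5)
You should first note that the paper itself contains no proof of this proposition: it is quoted as known, with the statement attributed to Bernshteyn and Lee \cite{MR4606413}. So the comparison is with the argument in that source, and at the level of architecture your plan matches it exactly: the two definitional endpoints ($\chi(G) \leq \textsf{ch}(G)$ via constant lists, $\textsf{wd}(G) \leq \textsf{d}(G)$ since \textsf{Delete}-only sequences are weak-degeneracy sequences), the cover construction for $\textsf{ch}(G) \leq \chi_{\textsf{DP}}(G)$, and an induction along the removal sequence for the substantive step $\chi_{\textsf{DPP}}(G) \leq \textsf{wd}(G) + 1$. Those parts are correct in outline.

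However, your accounting in the online game is off-model in three places, and the third is a genuine gap. First, for $\chi_{\textsf{DP}}(G) \leq \chi_{\textsf{DPP}}(G)$, ``revealing the whole cover in a single round'' is not a legal position: in each round Lister presents one blob vertex per chosen vertex and Painter colors a single independent set among the vertices presented \emph{that round}, so a one-round reveal would leave every uncolored vertex tokenless. The correct reduction presents a $k$-fold cover level by level over $k$ rounds. Second, in the \textsf{Delete} case your justification---that $u$'s budget $f(u)+1$ ``exceeds the number of already-colored neighbors that can obstruct it''---is unfounded: legality of \textsf{Delete} imposes no lower bound on $f(u)$ in terms of $d(u)$ (it only forces $f(v) \geq 1$ for neighbors $v$), and in DP-painting already-colored neighbors cannot obstruct $u$ at all, since matchings exist only among same-round presented vertices. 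The correct lift colors $u$ at its \emph{first} presentation and deletes from the simulated response any same-round neighbor whose presented vertex is matched to $u$'s; each neighbor thereby wastes at most one token over the whole game, which is precisely the $-1$ in $f'$. Third, in the \textsf{DeleteSave} case the roles in your sentence are scrambled: it is $u$, not $w$, whose commitment Painter defers. Painter declines to color $u$ exactly in those rounds where $u$'s presented vertex is matched to $w$'s presented vertex and $w$ is still uncolored; in every such round $w$ is presented, and the simulated winning strategy on $G'$ colors $w$ within its $f(w)+1$ presentations, so $u$ is declined at most $f(w)+1$ times, and $f(u) > f(w)$ supplies exactly the $f(w)+2$ tokens $u$ needs. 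Without this quantitative link between $u$'s declines and $w$'s presentations, the synchronization you yourself flag as the main obstacle does not close.
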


Wang \etal \cite{Wang2019+} studied three families of graphs with weak degeneracy at most three.

\begin{figure}%
\centering
\subcaptionbox{\label{fig:subfig:a}}
{\begin{tikzpicture}[line width = 1pt, scale = 0.8]
\def\s{1.3}
\coordinate (A) at (\s, 0);
\coordinate (B) at (60:\s);
\coordinate (O) at (0, 0);
\coordinate (C) at (-150:\s);
\coordinate (D) at (0,-\s);
\coordinate (E) at (\s, -\s);
\draw (A)--(B)--(O)--(C)--(D)--(E)--cycle;
\draw (A)--(O)--(D);
\node[circle, inner sep = 1.5, fill = white, draw] () at (O) {};
\node[circle, inner sep = 1.5, fill = white, draw] () at (A) {};
\node[circle, inner sep = 1.5, fill = white, draw] () at (B) {};
\node[circle, inner sep = 1.5, fill = white, draw] () at (C) {};
\node[circle, inner sep = 1.5, fill = white, draw] () at (D) {};
\node[circle, inner sep = 1.5, fill = white, draw] () at (E) {};
\end{tikzpicture}}\hspace{1.5cm}
\subcaptionbox{\label{fig:subfig:b}}
{\begin{tikzpicture}[line width = 1pt, scale = 0.8]
\def\s{1}
\coordinate (A) at (\s, 0);
\coordinate (B) at (0.5*\s, 0.7*\s);
\coordinate (O) at (0, 0);
\coordinate (E) at (\s, -\s);
\coordinate (D) at (0,-\s);
\coordinate (C) at (0.5*\s, -1.7*\s);
\draw (A)--(B)--(O)--(D)--(C)--(E)--cycle;
\draw (A)--(O);
\draw (D)--(E);
\node[circle, inner sep = 1.5, fill = white, draw] () at (O) {};
\node[circle, inner sep = 1.5, fill = white, draw] () at (A) {};
\node[circle, inner sep = 1.5, fill = white, draw] () at (B) {};
\node[circle, inner sep = 1.5, fill = white, draw] () at (C) {};
\node[circle, inner sep = 1.5, fill = white, draw] () at (D) {};
\node[circle, inner sep = 1.5, fill = white, draw] () at (E) {};
\end{tikzpicture}}\hspace{1.5cm}
\subcaptionbox{\label{fig:subfig:433}}
{\begin{tikzpicture}[line width = 1pt, scale = 0.8]
\def\s{1.2}
\coordinate (O) at (0, 0);
\coordinate (A) at (0, -\s);
\coordinate (B) at (-\s,-\s);
\coordinate (C) at (-\s,0);
\coordinate (D) at (135:\s);
\coordinate (E) at (0,\s);
\draw (O)--(A)--(B)--(C)--(D)--(E)--cycle;
\draw (C)--(O)--(D);
\node[circle, inner sep = 1.5, fill = white, draw] () at (O) {};
\node[circle, inner sep = 1.5, fill = white, draw] () at (A) {};
\node[circle, inner sep = 1.5, fill = white, draw] () at (B) {};
\node[circle, inner sep = 1.5, fill = white, draw] () at (C) {};
\node[circle, inner sep = 1.5, fill = white, draw] () at (D) {};
\node[circle, inner sep = 1.5, fill = white, draw] () at (E) {};
\end{tikzpicture}}\hspace{1.5cm}
\subcaptionbox{\label{fig:subfig:3444}}
{\begin{tikzpicture}[line width = 1pt, scale = 0.8]
\def\s{1}
\coordinate (O) at (0, 0);
\coordinate (E) at (\s, 0);
\coordinate (W) at (-\s,0);
\coordinate (N) at (0,\s);
\coordinate (S) at (0,-\s);
\coordinate (NW) at (-\s,\s);
\coordinate (SW) at (-\s,-\s);
\coordinate (SE) at (\s,-\s);
\draw (E)--(N)--(NW)--(SW)--(SE)--cycle;
\draw (N)--(S); 
\draw (E)--(W); 
\node[circle, inner sep = 1.5, fill = white, draw] () at (O) {};
\node[circle, inner sep = 1.5, fill = white, draw] () at (E) {};
\node[circle, inner sep = 1.5, fill = white, draw] () at (W) {};
\node[circle, inner sep = 1.5, fill = white, draw] () at (N) {};
\node[circle, inner sep = 1.5, fill = white, draw] () at (S) {};
\node[circle, inner sep = 1.5, fill = white, draw] () at (SE) {};
\node[circle, inner sep = 1.5, fill = white, draw] () at (SW) {};
\node[circle, inner sep = 1.5, fill = white, draw] () at (NW) {};
\end{tikzpicture}}\\
\subcaptionbox{\label{fig:subfig:533}}
{\begin{tikzpicture}[line width = 1pt, scale = 0.8]
\def\s{1}
\foreach \ang in {1, 2, 3, 4, 5}
{
\def\pointname{v\ang}
\coordinate (\pointname) at ($(\ang*360/5-54:\s)$);}
\coordinate (S) at ($(v2)!1!60:(v1)$);
\coordinate (S') at ($(v2)!1!60:(S)$);
\draw (v1)--(v2)--(v3)--(v4)--(v5)--cycle;
\draw (v1)--(S)--(v2);
\draw (S)--(S')--(v2);
\node[circle, inner sep = 1.5, fill = white, draw] () at (S) {};
\node[circle, inner sep = 1.5, fill = white, draw] () at (S') {};
\foreach \ang in {1, 2, 3, 4, 5}
{
\node[circle, inner sep = 1.5, fill = white, draw] () at (v\ang) {};
}
\end{tikzpicture}}\hspace{1cm}
\subcaptionbox{\label{fig:subfig:534f}}
{\begin{tikzpicture}[line width = 1pt, scale = 0.8]
\def\s{1}
\foreach \ang in {1, 2, 3, 4, 5}
{
\def\pointname{v\ang}
\coordinate (\pointname) at ($(\ang*360/5-54:\s)$);
}
\coordinate (S) at ($(v2)!1!60:(v1)$);
\coordinate (S') at ($(v2)!1!90:(S)$);
\coordinate (S'') at ($(S)!1!-90:(v2)$);
\draw (v1)--(v2)--(v3)--(v4)--(v5)--cycle;
\draw (v1)--(S)--(v2);
\draw (S)--(S'')--(S')--(v2);
\node[circle, inner sep = 1.5, fill = white, draw] () at (S) {};
\node[circle, inner sep = 1.5, fill = white, draw] () at (S') {};
\node[circle, inner sep = 1.5, fill = white, draw] () at (S'') {};
\foreach \ang in {1, 2, 3, 4, 5}
{
\node[circle, inner sep = 1.5, fill = white, draw] () at (v\ang) {};
}
\end{tikzpicture}}\hspace{1cm}
\subcaptionbox{\label{fig:subfig:g}}
{\begin{tikzpicture}[line width = 1pt, scale = 0.8]
\def\s{1.2}
\coordinate (O) at (0, 0);
\coordinate (v1) at (0:\s);
\coordinate (v2) at (60:\s);
\coordinate (v3) at (120:\s);
\coordinate (v4) at (180:\s);
\coordinate (OO) at ($(v3)!1!60:(v2)$);
\coordinate (A) at ($(OO)!1!-60:(v3)$);
\coordinate (B) at ($(OO)!1!-60:(A)$);
\coordinate (C) at ($(OO)!1!-60:(B)$);
\coordinate (D) at ($(OO)!1!-60:(C)$);
\draw (v1)--(v2)--(v3)--(v4)--(O)--cycle;
\draw (O)--(v2);
\draw (O)--(v3);
\draw (v3)--(A)--(B)--(C)--(D)--(v2);
\node[circle, inner sep = 1.5, fill = white, draw] () at (O) {};
\node[circle, inner sep = 1.5, fill = white, draw] () at (v3) {};
\node[circle, inner sep = 1.5, fill = white, draw] () at (v1) {};
\node[circle, inner sep = 1.5, fill = white, draw] () at (v2) {};
\node[circle, inner sep = 1.5, fill = white, draw] () at (v4) {};
\node[circle, inner sep = 1.5, fill = white, draw] () at (A) {};
\node[circle, inner sep = 1.5, fill = white, draw] () at (B) {};
\node[circle, inner sep = 1.5, fill = white, draw] () at (C) {};
\node[circle, inner sep = 1.5, fill = white, draw] () at (D) {};
\end{tikzpicture}}\hspace{1cm}
\subcaptionbox{\label{fig:subfig:h}}
{\begin{tikzpicture}[line width = 1pt, scale = 0.8]
\def\s{1.2}
\coordinate (O) at (0, 0);
\coordinate (v1) at (0:\s);
\coordinate (v2) at (60:\s);
\coordinate (v3) at (120:\s);
\coordinate (v4) at (180:\s);
\coordinate (OO) at ($(v2)!1!60:(v1)$);
\coordinate (A) at ($(OO)!1!-60:(v2)$);
\coordinate (B) at ($(OO)!1!-60:(A)$);
\coordinate (C) at ($(OO)!1!-60:(B)$);
\coordinate (D) at ($(OO)!1!-60:(C)$);
\draw (v1)--(v2)--(v3)--(v4)--(O)--cycle;
\draw (O)--(v2);
\draw (O)--(v3);
\draw (v2)--(A)--(B)--(C)--(D)--(v1);
\node[circle, inner sep = 1.5, fill = white, draw] () at (O) {};
\node[circle, inner sep = 1.5, fill = white, draw] () at (v3) {};
\node[circle, inner sep = 1.5, fill = white, draw] () at (v1) {};
\node[circle, inner sep = 1.5, fill = white, draw] () at (v2) {};
\node[circle, inner sep = 1.5, fill = white, draw] () at (v4) {};
\node[circle, inner sep = 1.5, fill = white, draw] () at (A) {};
\node[circle, inner sep = 1.5, fill = white, draw] () at (B) {};
\node[circle, inner sep = 1.5, fill = white, draw] () at (C) {};
\node[circle, inner sep = 1.5, fill = white, draw] () at (D) {};
\end{tikzpicture}}
\caption{Forbidden configurations in planar graphs.}
\label{FIGPAIRWISE3456}
\end{figure}
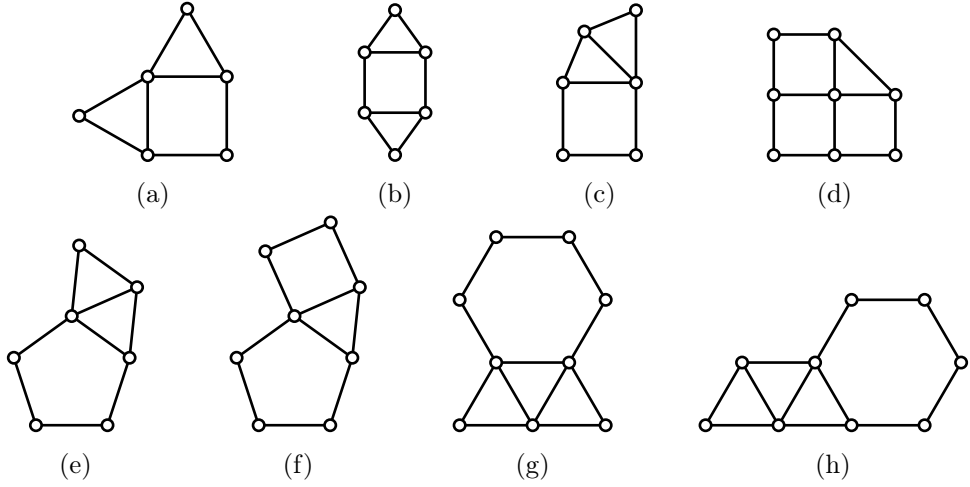

\begin{figure}%
\centering
\subcaptionbox{\label{fig:subfig:TA}}{\begin{tikzpicture}[line width = 1pt, scale = 0.8]
\coordinate (A) at (45:1);
\coordinate (B) at (135:1);
\coordinate (C) at (225:1);
\coordinate (D) at (-45:1);
\coordinate (H) at (90:1.414);
\draw (A)--(H)--(B)--(C)--(D)--cycle;
\draw (A)--(B);
\node[circle, inner sep = 1.5, fill = white, draw] () at (A) {};
\node[circle, inner sep = 1.5, fill = white, draw] () at (B) {};
\node[circle, inner sep = 1.5, fill = white, draw] () at (C) {};
\node[circle, inner sep = 1.5, fill = white, draw] () at (D) {};
\node[circle, inner sep = 1.5, fill = white, draw] () at (H) {};
\end{tikzpicture}}\hspace{1.5cm}
\subcaptionbox{\label{fig:subfig:TB}}{\begin{tikzpicture}[line width = 1pt, scale = 0.8]
\coordinate (A) at (45:1);
\coordinate (B) at (135:1);
\coordinate (C) at (225:1);
\coordinate (D) at (-45:1);
\coordinate (H) at (90:1.414);
\coordinate (X) at ($(A)+(0, 1)$);
\coordinate (Y) at (45:2);
\draw (A)--(Y)--(X)--(H)--(B)--(C)--(D)--cycle;
\draw (X)--(A)--(H);
\node[circle, inner sep = 1.5, fill = white, draw] () at (A) {};
\node[circle, inner sep = 1.5, fill = white, draw] () at (B) {};
\node[circle, inner sep = 1.5, fill = white, draw] () at (C) {};
\node[circle, inner sep = 1.5, fill = white, draw] () at (D) {};
\node[circle, inner sep = 1.5, fill = white, draw] () at (H) {};
\node[circle, inner sep = 1.5, fill = white, draw] () at (X) {};
\node[circle, inner sep = 1.5, fill = white, draw] () at (Y) {};
\end{tikzpicture}}
\caption{Forbidden configurations in toroidal graphs.}
\label{A345}
\end{figure}
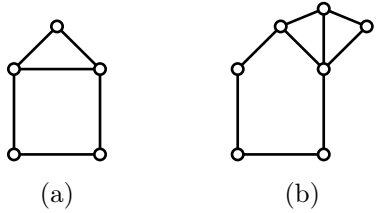

\begin{theorem}[Wang \etal \cite{Wang2019+}]\text{}
\begin{enumerate}
\item Every planar graph without any configuration in \cref{FIGPAIRWISE3456} is weakly $3$-degenerate. 

\item Every toroidal graph without any configuration in \cref{A345} is weakly $3$-degenerate. 

\item Every planar graph without intersecting $5$-cycles is weakly $3$-degenerate. 
\end{enumerate}
\end{theorem}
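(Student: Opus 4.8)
The plan is to prove all three statements by a single template: an extremal-counterexample argument powered by discharging, in which the forbidden configurations serve precisely to guarantee that the discharging ends with nonnegative charge everywhere. Fix the ambient surface (the plane for (1) and (3), the torus for (2)) and let $G$ be a vertex-minimal graph in the stated class that is \emph{not} weakly $3$-degenerate. Each class is hereditary: planarity and toroidality are subgraph-closed, deleting a vertex creates no new cycle and only merges faces (never producing a shorter face), so no copy of a forbidden configuration appears in $G-v$ that was absent from $G$. Hence every proper subgraph of $G$ is weakly $3$-degenerate, and I may invoke this as an induction hypothesis.

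I would first record the one reduction that needs no structural hypothesis: \emph{every vertex of degree at most $3$ is removable}. If $\deg_G(v)\le 3$, take a witnessing removal sequence for $G-v$ (available by minimality), run exactly those operations on $G$ while postponing $v$ to the very end, and finish with a single legal \textsf{Delete} of $v$. Starting from $f\equiv 3$, the budget $f(v)$ drops by at most one each time a neighbour of $v$ is deleted, so $f(v)\ge 3-2=1$ at each such moment and $f(v)\ge 0$ when $v$ is finally deleted; moreover the budgets of all vertices other than $v$ evolve identically in $G$ and in $G-v$ (because $v$ is deleted last and so never decrements anyone during the inherited sequence), whence each inherited \textsf{Delete} and \textsf{DeleteSave} remains legal. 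Thus $\delta(G)\ge 4$, and in particular every vertex carries nonnegative initial charge in the discharging below.

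The heart of each part is the reducibility of the listed configurations, all of which force $4$-vertices, for which the ``postpone'' trick fails (a $4$-vertex would end at budget $3-4=-1$). Here the essential device is \textsf{DeleteSave}: to remove a $4$-vertex $u$ at net cost three, one must save a neighbour $w$, which requires arranging a strict budget gap $f(u)>f(w)$ from the configuration's internal edges or from earlier, budget-lowering deletions of $w$'s other neighbours. For each configuration in \cref{FIGPAIRWISE3456} and \cref{A345}, and for the local structure forced by two $5$-cycles meeting at a vertex in part (3), I would exhibit an explicit removal order together with a choice of saved neighbours, verify the inequalities $f(u)>f(w)$ at every \textsf{DeleteSave}, check that all intermediate budgets stay nonnegative, and concatenate with the minimality-supplied removal sequence for the rest of the graph by the same bookkeeping as for a single low-degree vertex. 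Establishing that the absence of the forbidden configurations leaves, at \emph{every} $4$-vertex, a neighbour against which such a saving operation can be scheduled is the main technical obstacle and the step where the real work lies.

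Finally I would discharge. Assign $\mu(v)=\deg(v)-4$ and $\mu(f)=\ell(f)-4$; Euler's formula gives
\[
\sum_{v\in V(G)}\mu(v)+\sum_{f\in F(G)}\mu(f)=4\bigl(|E(G)|-|V(G)|-|F(G)|\bigr)=\begin{cases}-8,&G\subseteq\text{plane},\\0,&G\subseteq\text{torus}.\end{cases}
\]
Since $\delta(G)\ge 4$, all negative charge sits on $3$-faces (and, for part (3), is controlled by how $5$-cycles cluster), while $\ge\!5$-vertices and $\ge\!5$-faces are the only donors. I would pick rules routing charge from these donors to each deficient $3$-face and use the excluded configurations to certify that every such face has a donor within reach, forcing nonnegative final charge everywhere; for the planar parts (1) and (3) this contradicts the strictly negative total $-8$. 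The remaining obstacle is the toroidal part (2), where the total is exactly $0$, so nonnegativity alone is not yet contradictory. I expect to resolve this by designing the rules so that some element necessarily retains \emph{strictly} positive charge: a parameter count shows a toroidal graph with $\delta\ge 4$ cannot be charge-balanced (there is always a vertex of degree $\ge 5$ or a face of length $\ge 4$ whose surplus cannot be fully absorbed once the \cref{A345} configurations are excluded), so the final total would be positive, contradicting $0$.
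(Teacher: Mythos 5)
This statement is an imported result, quoted from Wang \etal \cite{Wang2019+}; the present paper contains no proof of it, so your proposal can only be judged on its own terms. As a template it is the right genre --- minimal counterexample, reducible configurations via \textsf{DeleteSave}, discharging --- and your low-degree reduction (postpone $v$, inherit the sequence for $G-v$, note that $v$'s budget decreases only when a neighbour is deleted) is correct. But the proposal is a plan, not a proof: you explicitly defer the reducibility of every configuration in \cref{FIGPAIRWISE3456} and \cref{A345} (``the step where the real work lies''), and this is not routine bookkeeping. After the inherited sequence removes $G-V(\Gamma)$, each vertex of $\Gamma$ sits at budget $3$ minus its outside degree, and one must exhibit, for each configuration, an internal elimination order with saved neighbours whose strict inequalities $f(u)>f(w)$ actually hold; whether this is possible is precisely what distinguishes the listed configurations from arbitrary clusters of $4$-vertices, and nothing in the write-up certifies it.

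The concrete error is in your toroidal endgame. You claim a ``parameter count shows a toroidal graph with $\delta\ge 4$ cannot be charge-balanced'' once the \cref{A345} configurations are excluded. This is false: the quadrangulation $C_m\,\square\,C_n$ of the torus ($m,n\ge 4$) is $4$-regular with every face a $4$-face, hence every vertex and face has charge exactly $0$ under $\mu(v)=d(v)-4$, $\mu(f)=\ell(f)-4$; it is triangle-free, so it avoids both configurations of \cref{A345}. So nonnegativity plus your claimed strict surplus cannot be forced, and the discharging contradiction evaporates exactly where you need it. Any correct treatment must do something extra in the balanced regime --- compare how the present paper handles the same difficulty in its proof of \cref{SR}: the rules (including \ref{R7} and the special-vertex rule \ref{R1}) are tuned so that the only charge-zero vertices fall into four labelled equations, each of which is then killed by a reducible configuration, and the regular/tight cases in the weak-degeneracy argument are disposed of by the Gallai-type \cref{Gallai-Weak} rather than by charge alone. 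Without an analogous mechanism (either additional reducible configurations present in every charge-balanced toroidal graph, or a structural lemma for the $4$-regular case), part (2) of your argument does not close; parts (1) and (3) are incomplete but at least have the $-8$ total working in their favour.
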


We say that two cycles are \emph{adjacent} if they share at least one edge, and say that they are \emph{normally adjacent} if their intersection is isomorphic to $K_{2}$. Recently, Han \etal \cite{MR4663366} studied triangle-free planar graphs without 4-cycles normally adjacent to 4- and 5-cycles.

\begin{theorem}[Han \etal \cite{MR4663366}]
Every triangle-free planar graph without $4$-cycles normally adjacent to $4$- and $5$-cycles is weakly $2$-degenerate. 
\end{theorem}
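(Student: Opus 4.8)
The plan is to argue by contradiction with the discharging method, inside the reducibility framework for weak degeneracy of Bernshteyn and Lee. Let $G$ be a vertex-minimal counterexample: a triangle-free planar graph containing no $4$-cycle normally adjacent to a $4$- or $5$-cycle that is not weakly $2$-degenerate. The class is closed under taking subgraphs---deleting vertices destroys no triangles and creates no new normally adjacent short cycles---so by minimality every proper subgraph of $G$ is weakly $2$-degenerate. After reducing to blocks I may assume $G$ is $2$-connected, and I fix a plane embedding; since $G$ is triangle-free every face has length at least $4$. I assign initial charges $\mu(v)=2\deg(v)-6$ to each vertex $v$ and $\mu(\phi)=\ell(\phi)-6$ to each face $\phi$, where $\ell(\phi)$ is the length of $\phi$; by Euler's formula $\sum_{v}\mu(v)+\sum_{\phi}\mu(\phi)=6|E|-6|V|-6|F|=-6(|V|-|E|+|F|)=-12<0$. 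Under this assignment only $2$-vertices (charge $-2$), $4$-faces (charge $-2$) and $5$-faces (charge $-1$) are negative, while vertices of degree at least $4$ and faces of length at least $6$ carry surplus and $3$-vertices are neutral. The contradiction will come from redistributing charge so that, once the reducible configurations are excluded, every vertex and face is left nonnegative.

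The first half of the work is a list of reducible configurations that therefore cannot appear in $G$. Each is dispatched by removing its vertices through a short sequence of legal \textsf{Delete} and \textsf{DeleteSave} operations and checking that the remaining graph $G^{*}$, which again lies in the class, is left with a budget function $f^{*}\ge 2$; minimality then makes $G^{*}$ weakly $2$-degenerate and the two sequences splice to a legal elimination of $G$, a contradiction. The \textsf{DeleteSave} operation is indispensable here: triangle-free planar graphs need not be $2$-degenerate---the $3$-regular cube $Q_{3}$, excluded from the class precisely because its adjacent $4$-faces are normally adjacent $4$-cycles, has minimum degree $3$---so one cannot hope to keep every boundary budget at $2$ using \textsf{Delete} alone. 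Whenever a low-degree vertex is eliminated next to a neighbour $w$ of strictly larger current budget, a \textsf{DeleteSave} against $w$ shields $w$, which is exactly what certifies the value $2$ rather than $3$. The configurations I expect to rule out are the familiar ones for this regime: a vertex of degree at most $1$; adjacent $2$-vertices; a $2$-vertex whose two incident faces are both short; and $3$-vertices sitting on configurations of mutually short faces. From their absence one reads the structural facts used below---$\delta(G)\ge 2$, that each $2$-vertex meets a face of length at least $6$, and that $4$-faces are not tightly packed with other short faces.

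The second half sets up the discharging rules. The donors are the faces of length at least $6$ and the vertices of degree at least $4$; the recipients are the $2$-vertices, the $4$-faces and the $5$-faces. Rules send charge along incidences---from a long face to the short faces and low-degree vertices on its boundary, and from a high-degree vertex to its incident short faces and to nearby $2$-vertices. The hypothesis enters exactly through the deficient faces: because no $4$-cycle is normally adjacent to a $4$- or $5$-cycle, a $4$-face cannot share a full edge with another $4$- or $5$-face, so every $4$-face is bounded by edges that separate it from faces of length at least $6$ (apart from the already-excluded short-cycle patterns). This prevents the cube-like clustering of deficient faces and guarantees that each $4$- and $5$-face is adjacent to enough donors to reach charge $0$; a routine neighbourhood analysis then clears the $2$-vertices. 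Summing the final charges contradicts the total $-12$.

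I expect the main obstacle to be twofold. On the reducibility side the budget function $f$ stops being constant after the first operation, so each configuration must be verified for the worst $f$-values still permitted by the earlier reductions, with the order of \textsf{Delete} and \textsf{DeleteSave} chosen so that every intermediate value stays nonnegative and each \textsf{DeleteSave} meets its strict inequality $f(u)>f(w)$; obtaining a \emph{complete} list that the discharging can lean on is the delicate part. On the discharging side the crux is to convert ``normally adjacent'' into usable quantitative bounds: one must show that a $4$- or $5$-face together with its incident low-degree vertices and short neighbouring faces cannot be simultaneously starved, and it is here that the full strength of forbidding normal adjacency to both $4$- and $5$-cycles is needed. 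Once both pieces are in place the charge count closes, giving $\textsf{wd}(G)\le 2$, and by \cref{prop} this yields $\chi_{\textsf{DPP}}(G)\le 3$ for every graph in the class.
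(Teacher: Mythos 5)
This statement is quoted in the paper from Han \etal without proof, so there is no in-paper argument to match; judged on its own, your proposal is a plan rather than a proof, and the gap is not just one of detail. The entire mathematical content is deferred: you never exhibit the list of reducible configurations (you write that you ``expect'' to rule out certain ones), you never state the discharging rules or verify a single vertex or face case, and you yourself flag that assembling a complete reducible list and making ``normally adjacent'' quantitative are the delicate parts. In the discharging method those two halves \emph{are} the proof; a framework description with charges $2d(v)-6$ and $\ell(f)-6$ summing to $-12$ is common to dozens of papers and certifies nothing by itself.

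Moreover, the one concrete structural claim you do make is false, and the counting you sketch cannot close as described. Forbidding \emph{normal} adjacency does not mean ``a $4$-face cannot share a full edge with another $4$- or $5$-face'': two such cycles may intersect in more than a single edge (more than $K_{2}$) and still be permitted. Also, $6$-faces have charge $\ell - 6 = 0$, so they are not donors, contrary to your claim that faces of length at least $6$ ``carry surplus.'' The truncated octahedron graph makes the failure concrete: it is planar, $3$-regular, bipartite (hence triangle-free with no $5$-cycles), and its six square faces are pairwise vertex-disjoint, so it contains no $4$-cycle normally adjacent to a $4$- or $5$-cycle and lies in the class. There every vertex has charge $0$, every hexagonal face has charge $0$, and the six $4$-faces each carry $-2$; no redistribution whatsoever can make all charges nonnegative. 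So your scheme, in which $4$-faces are rescued by adjacent long faces and high-degree vertices, collapses on a graph squarely inside the hypothesis; the theorem's truth for such graphs must instead be carried by vertex-level reducible configurations (for instance a $4$-face incident only to $3$-vertices, handled via \textsf{DeleteSave}), which is precisely the part your proposal leaves unspecified. Until those configurations are written down, verified legal (including the strict inequality $f(u) > f(w)$ at each \textsf{DeleteSave} and the splicing with the minimal-counterexample order), and shown to cover every charge deficiency, the argument is not a proof.
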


Wang \cite{MR4564473} provided the following sufficient conditions for a planar graph to be weakly $2$-degenerate.

\begin{theorem}[Wang \cite{MR4564473}]\text{}
\begin{enumerate}
\item Let $G$ be a planar graph without $4$-, $6$- and $9$-cycles. If there are no $7$-cycles normally adjacent to $5$-cycles, then $G$ is weakly $2$-degenerate. 

\item Let $G$ be a planar graph without $4$-, $6$-, and $8$-cycles. If there are no $3$-cycles normally adjacent to $9$-cycles, then $G$ is weakly $2$-degenerate. 
\end{enumerate}
\end{theorem}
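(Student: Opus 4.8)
The plan is to prove both parts by the standard discharging method applied to a minimal counterexample, since the two statements differ only in which short cycles and which normally adjacent pairs are forbidden while the skeleton of the argument is identical. Fix one of the two statements and let $G$ be a counterexample minimizing $|V(G)|+|E(G)|$. The class in question is hereditary, being defined purely by forbidden cycles and forbidden normal adjacencies, so every proper subgraph of $G$ lies in the same class and is weakly $2$-degenerate by minimality. We may assume $G$ is connected and fix a plane embedding; since forbidding $4$- and $6$-cycles forces every face boundary of a $2$-connected plane graph to be a cycle, the face lengths are tightly constrained (no $4$-, $6$-, or $9$-faces in part (1); no $4$-, $6$-, or $8$-faces in part (2)). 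The goal is to exhibit, in any such $G$, a local configuration that cannot occur in a minimal counterexample, yielding a contradiction.

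First I would assemble a list of reducible configurations. A configuration $C$ is reducible if, whenever $G$ contains $C$, one can delete the internal vertices of $C$ by a sequence of legal \textsf{Delete} and \textsf{DeleteSave} operations applied to the constant budget $f\equiv 2$, arriving at a proper subgraph $G'$ carrying a residual budget $f'$; since $G'$ is smaller and lies in the class, induction (strengthened so that the budget may legitimately drop to $1$ on the boundary created by the deletions) shows $(G',f')$ is weakly $f'$-degenerate, and prepending the sequence shows $G$ is weakly $2$-degenerate. The crucial feature of the budget-$2$ regime is that \textsf{DeleteSave}$(G,f,u,w)$ is legal only when $f(u)>f(w)$, so with all budgets capped at $2$ one can save a neighbor $w$ only after $w$ has already been decremented to value $1$; this severely limits the protection available to boundary vertices and is exactly what forces the very sparse cycle hypotheses. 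Concretely, the reducible list will include vertices of degree at most $2$, short paths of degree-$3$ vertices, and degree-$3$ vertices incident to a $3$-face whose other vertices have small degree, where the shared edge of the triangle lets one schedule a \textsf{DeleteSave} at the moment a boundary vertex sits at budget $1$. Each such claim is verified by writing out an explicit admissible order of operations.

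Next I would run the discharging argument. Assign to each vertex $v$ the charge $\mu(v)=\deg(v)-4$ and to each face the charge $\mu(f)=\ell(f)-4$, so that by Euler's formula
\[
\sum_{v\in V(G)}(\deg(v)-4)+\sum_{f\in F(G)}(\ell(f)-4)=-8<0 .
\]
Only $3$-faces and vertices of degree at most $3$ carry negative charge, so the discharging rules move charge from large faces and high-degree vertices toward triangles and low-degree vertices. Because the reducible configurations are absent, every negative object has enough rich neighbors to be compensated, and one shows each final charge is nonnegative, contradicting the negative total. The forbidden cycle lengths enter by deleting whole classes of faces, which both removes potential charge sinks and restricts how triangles and short faces can sit next to one another; the normal-adjacency hypotheses (no $7$-cycle normally adjacent to a $5$-cycle in part (1), no $3$-cycle normally adjacent to a $9$-cycle in part (2)) rule out precisely the adjacent-face patterns that would otherwise drain a large face or a triangle below zero.

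The main obstacle, in both parts, is the triangles: they are the only faces of negative charge, and controlling how they cluster and how they share edges and vertices with the surrounding permitted faces is where the normal-adjacency hypotheses do their work. Two steps demand the most care. The first is the reducibility analysis: because \textsf{DeleteSave} can protect a boundary vertex only once it has dropped to budget $1$, one must find, for each configuration, an order of removals in which the saves become available exactly when needed, and getting this bookkeeping right for the triangle-based configurations is delicate. The second is designing a single system of discharging rules that simultaneously feeds enough charge to degree-$3$ vertices and to $3$-faces using only the faces of length at least $5$ that the hypotheses leave available, and verifying nonnegativity face-by-face and vertex-by-vertex under the two different forbidden sets. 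I would treat the two parts in parallel, with a shared reducibility core and two tailored discharging phases, and finally exhibit small graphs showing the cycle and normal-adjacency restrictions cannot be dropped, establishing sharpness.
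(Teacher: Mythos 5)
This theorem is not proved in the present paper at all: it is quoted verbatim from Wang \cite{MR4564473} as background, so there is no in-paper proof to match your argument against. Judged on its own terms, your proposal is a template rather than a proof: the entire mathematical content --- the explicit list of reducible configurations, the explicit discharging rules, and the vertex-by-vertex and face-by-face verification of nonnegativity --- is deferred with phrases like ``I would assemble a list'' and ``one shows each final charge is nonnegative.'' Nothing in the text uses the specific hypotheses (no $4$-, $6$-, $9$-cycles and no $7$-cycle normally adjacent to a $5$-cycle, respectively no $4$-, $6$-, $8$-cycles and no $3$-cycle normally adjacent to a $9$-cycle) beyond naming them, so the same text would ``prove'' false variants of the statement. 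The only parts that are concrete (the charge $\mu(v)=\deg(v)-4$, $\mu(f)=\ell(f)-4$ summing to $-8$, and the observation that \textsf{DeleteSave} requires $f(u)>f(w)$) are correct but routine.

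The one mechanism you do describe in detail is unsound as stated. You propose to delete the configuration's vertices first and then apply induction to the remaining graph $G'$ with the residual budget $f'$, invoking an induction hypothesis ``strengthened so that the budget may legitimately drop to $1$ on the boundary.'' Minimality of a counterexample only yields that $G'$ is weakly $2$-degenerate with the \emph{constant} budget $2$; weak $f'$-degeneracy for a non-constant $f'\le 2$ is a strictly different statement, and the blanket strengthening is false: a triangle in which every vertex has budget $1$ is not weakly degenerate (after one \textsf{Delete} the remaining adjacent pair sits at $(0,0)$, so neither \textsf{Delete} nor \textsf{DeleteSave} is legal), and triangles are permitted in both graph classes, so a ``boundary'' at budget $1$ cannot be handled for free. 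Repairing this is exactly the nontrivial part of such proofs, and the standard repair is the Bernshteyn--Lee lemma quoted in this paper as \cref{Gallai-Weak}: one takes $U$ inside the set of vertices of degree equal to the budget plus one (here degree-$3$ vertices), removes $G-U$ \emph{first}, and concludes that in a minimal counterexample every component of $G[U]$ is a GDP-tree, so any $2$-connected degree-$3$ configuration that is neither a cycle nor complete is reducible --- this is precisely how the host paper derives its own weak-degeneracy result (\cref{WD}) from \cref{SR}. Without either that lemma or a per-configuration removal schedule whose legality is actually verified, your reducibility step, and hence the whole plan, does not go through.
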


In this paper, we will prove that every toroidal graph without $K_{5}^{-}$ and $6$-cycles is weakly $3$-degenerate. 

\begin{theorem}\label{WD}
Every toroidal graph without $K_{5}^{-}$ and $6$-cycles is weakly $3$-degenerate. 
\end{theorem}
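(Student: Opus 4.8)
The plan is to argue by induction on $|V(G)|$, using the structural dichotomy of \cref{SR}. Assuming the statement for all smaller graphs, let $G$ be a connected toroidal graph without $K_{5}^{-}$ and $6$-cycles (weak $3$-degeneracy is decided component by component, so connectivity is harmless). By \cref{SR}, $G$ contains either a vertex of degree at most three or an induced copy of one of the configurations in \cref{K5--,Kite,House}; let $S$ denote the vertex set of this configuration (a single low-degree vertex in the first case), and set $H \coloneqq G - S$. Since being toroidal, $K_{5}^{-}$-free and $6$-cycle-free are all preserved under taking subgraphs, $H$ again lies in our class and has fewer vertices, so by the induction hypothesis $H$ is weakly $3$-degenerate; fix a legal sequence $\sigma$ of \textsf{Delete}/\textsf{DeleteSave} operations that removes all of $H$ starting from the constant function $3$.

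The heart of the argument is a transfer step: I would run the very same sequence $\sigma$ inside $G$ first, removing all of $H$ before touching $S$. Because no vertex of $S$ is deleted during this phase, every vertex $w \in H$ sees exactly the same deleted neighbours as it did in $H$ alone, so its running $f$-value in $G$ coincides with its value under $\sigma$; consequently each operation of $\sigma$ remains legal with respect to its $H$-neighbours, and every \textsf{DeleteSave} keeps its strict inequality (its save-target lies in $H$). The only new effect is that deleting an $H$-vertex also decrements its neighbours in $S$. Here the degree restrictions of the configurations are exactly what is needed: each $s \in S$ has at most three neighbours in $H$ (immediate when $S$ is a single vertex of degree at most three, and when $S$ is one of the three configurations every vertex is a $4$-vertex with at least one neighbour inside $S$), so it is decremented at most three times; since it starts at $3$, it never drops below $0$ and each decrement is applied to a value at least $1$. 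Thus $\sigma$ runs legally in $G$ and leaves the pair $(S, f'')$, where $f''(s) = 3 - |N_{H}(s)|$.

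It then remains to remove the small graph $S$ from $(S, f'')$ by a legal sequence, which I would do by hand in each of the four cases. In the minimum-degree case $S$ is a single vertex with $f'' \ge 0$ and one \textsf{Delete} finishes. For the Kite ($K_{4}$ minus an edge) the $f''$-values are $2,1,2,1$, and the order ``\textsf{DeleteSave} a degree-$3$ vertex saving a degree-$2$ neighbour, then \textsf{Delete} the rest'' works. For the configuration of \cref{K5--} the values are $3,1,3,2,2$, and a single \textsf{DeleteSave} of a value-$3$ vertex (saving the value-$1$ vertex) reduces $S$ to a sparse graph that peels off by further \textsf{Delete} operations; the House of \cref{House} is handled symmetrically, its values being $2,2,1,1,1$ and one \textsf{DeleteSave} breaking the remaining cycle into a path. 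In each case the sequence removes all of $S$, so concatenating it after $\sigma$ witnesses that $G$ is weakly $3$-degenerate, contradicting the choice of $G$ and completing the induction.

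The main obstacle I anticipate is not the transfer step, which is bookkeeping controlled by the bound ``at most three $H$-neighbours'', but the tightness of the residual budgets $f''$. Several of the dense residual graphs sit on the boundary of weak degeneracy --- for instance a $4$-cycle with all values $1$, a triangle with all values $1$, and $K_{4}$ with all values $2$ are \emph{not} weakly degenerate --- so a careless deletion order gets stuck, and the role of the \textsf{DeleteSave} operation (together with the strict-inequality condition it demands) is precisely to steer each configuration away from such dead ends. Verifying that every configuration furnished by \cref{SR} does admit a successful order, and in particular that none of them collapses into one of these unremovable remnants, is the delicate point; of course the genuinely hard combinatorics has already been absorbed into the structural theorem \cref{SR} itself.
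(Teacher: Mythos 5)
Your proof is correct, but it takes a genuinely different route in the key step. The paper also starts from a minimal counterexample and \cref{SR}, but it disposes of the configurations in one stroke via \cref{Gallai-Weak} (Bernshteyn--Lee): taking $h \equiv 4$ and $U = V(\Gamma)$ (legitimate since every configuration vertex is a $4$-vertex, so $d(u) = h(u)$ on $U$), minimality gives that $G - U$ is weakly $3$-degenerate, whence every component of $G[U]$ must be a GDP-tree; but each induced configuration in \cref{K5--,Kite,House} is $2$-connected and neither a cycle nor a complete graph, a contradiction. Your transfer step plus hand-crafted removal sequences is in effect a self-contained re-proof of the special case of that lemma that is needed here: I checked your residual budgets $f''(s) = d_{S}(s) - 1$ (namely $3,1,3,2,2$ for \cref{K5--}, $2,1,2,1$ for the Kite, $2,2,1,1,1$ for the House) and legal sequences do exist in every case, so the argument goes through. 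What your approach buys is elementarity --- no appeal to the GDP-tree machinery --- at the price of delicate order-finding, and your sketches are slightly glib on exactly this point: ``then \textsf{Delete} the rest'' only works for carefully chosen orders (in the Kite residual after \textsf{DeleteSave} of $N$ saving $E$, deleting $E$ next strands the edge $WS$ at values $(0,0)$, while the order $W, S, E$ succeeds; in the \cref{K5--} residual after \textsf{DeleteSave} of $v_{1}$ saving $v_{2}$, one must peel $v_{4}, v_{5}, v_{3}, v_{2}$ in that order, since deleting $v_{2}$ or $v_{3}$ first collapses into the unremovable all-$1$ triangle you rightly warn about). The paper's route buys brevity and robustness: the lemma certifies reducibility of any $2$-connected non-cycle, non-complete, degree-tight induced subgraph without exhibiting any sequence, which is precisely why the authors can treat all three configurations ``similarly'' in two lines.
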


\begin{corollary}
Every toroidal graph without $K_{5}^{-}$ and $6$-cycles has DP-paint number at most $4$. As a consequence, every such graph is $4$-DP-colorable. 
\end{corollary}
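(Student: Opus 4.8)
The plan is to obtain the Corollary as a formal consequence of \cref{WD} together with the chain of parameter inequalities recorded in \cref{prop}; no independent argument is required. Let $G$ be a toroidal graph containing neither $K_{5}^{-}$ nor a $6$-cycle.

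First I would invoke \cref{WD} to conclude that $\textsf{wd}(G) \leq 3$. Substituting this into the estimate $\chi_{\textsf{DPP}}(G) \leq \textsf{wd}(G) + 1$ supplied by \cref{prop} yields $\chi_{\textsf{DPP}}(G) \leq 4$, which is precisely the assertion that the DP-paint number of $G$ is at most $4$. The same proposition records $\chi_{\textsf{DP}}(G) \leq \chi_{\textsf{DPP}}(G)$, so $\chi_{\textsf{DP}}(G) \leq 4$ and $G$ is $4$-DP-colorable; this establishes the second sentence of the statement.

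Because each step is a direct citation, the Corollary has no real obstacle of its own --- all of the difficulty is absorbed into \cref{WD}, which is the genuine engine. For completeness I sketch how I would prove that theorem. I would argue by minimal counterexample: let $G$ be a vertex-minimal toroidal graph without $K_{5}^{-}$ and $6$-cycles that fails to be weakly $3$-degenerate, and apply the structural dichotomy of \cref{SR}. Since deleting a vertex preserves toroidality and the absence of $K_{5}^{-}$ and $6$-cycles, the class is closed under taking subgraphs, so minimality is available. If the minimum degree of $G$ is at most three, then a vertex of that degree is removable by a single legal \textsf{Delete}, reducing $G$ to a strictly smaller member of the class and contradicting minimality. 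Otherwise \cref{SR} forces one of the induced configurations of \cref{K5--}, \cref{Kite}, or \cref{House} to appear, and the task becomes to exhibit, for each of these, a legal sequence of \textsf{Delete} and \textsf{DeleteSave} operations (with the constant target $f \equiv 3$) that removes the configuration's vertices while keeping $f$ nonnegative, again contradicting minimality. The hard part is exactly this reducibility analysis --- choosing the correct order of removals and the right \textsf{DeleteSave} ``savings'' so that every configuration in \cref{SR} can be peeled off --- but once \cref{WD} is granted, the present Corollary follows in two lines.
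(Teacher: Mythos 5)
Your derivation of the corollary is exactly the paper's: \cref{WD} gives $\textsf{wd}(G) \leq 3$, and the chain in \cref{prop} yields $\chi_{\textsf{DPP}}(G) \leq \textsf{wd}(G) + 1 \leq 4$ and $\chi_{\textsf{DP}}(G) \leq \chi_{\textsf{DPP}}(G) \leq 4$; the paper states the corollary with no further argument for precisely this reason, so on the statement itself there is nothing to add.

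Two remarks on your appended sketch of \cref{WD}, since you offered it as part of the proposal. First, the paper does not carry out the reducibility analysis you describe: it never exhibits explicit \textsf{Delete}/\textsf{DeleteSave} sequences for the configurations of \cref{K5--}, \cref{Kite} and \cref{House}. Instead it invokes the Gallai-type \cref{Gallai-Weak} of Bernshteyn and Lee: in a minimal counterexample, taking $U$ to be the vertex set of the induced configuration (all of whose vertices have degree $4 = h(v)$ in $G$), the component $G[U]$ would have to be a GDP-tree, yet each configuration is $2$-connected and is neither a cycle nor a complete graph---a contradiction with no case analysis of removal orders at all. Your route would presumably also work but is substantially more labor. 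Second, your handling of a vertex $v$ of degree at most three is backwards: performing \textsf{Delete}$(G, f, v)$ \emph{first} leaves the residual function equal to $2$ on the neighbors of $v$, and weak $3$-degeneracy of $G - v$ (with the constant function $3$) does not imply weak degeneracy with respect to that smaller function, so minimality cannot be applied as you state. The correct argument removes $v$ \emph{last}: mimic a legal removal sequence for $G - v$, during which $f(v)$ decreases at most $d(v) \leq 3$ times and so stays nonnegative while the legality of every operation is unaffected, and finish with a single legal \textsf{Delete} of $v$. This is the standard justification behind the paper's assertion that a minimum counterexample has minimum degree at least four.
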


Next, we introduce the Alon-Tarsi number, which also has very close relationship with graph coloring parameters. A digraph $D$ is \emph{Eulerian} if $d_{D}^{+}(v) = d_{D}^{-}(v)$ for each vertex $v \in V(D)$. In particular, a digraph with no arcs is Eulerian. For a given graph $G$ and an orientation $D$ of $G$, let $\ee(D)$ be the family of spanning Eulerian sub-digraphs of $D$ with an even number of arcs, and let $\oe(D)$ be the family of spanning Eulerian sub-digraphs of $D$ with an odd number of arcs. We define the \emph{Alon-Tarsi difference} of $D$ as  
\[
\mathrm{diff}(D) = |\ee(D)| - |\oe(D)|. 
\]
We say that an orientation $D$ of $G$ is an \emph{Alon-Tarsi orientation} (AT-orientation for short) if $\mathrm{diff}(D) \neq 0$. The \emph{Alon-Tarsi number} $AT(G)$ of a graph $G$ is the smallest integer $k$ such that $G$ has an AT-orientation $D$ with $\Delta^{+}(D) < k$. The following proposition is from \cite{MR2578908}.

\begin{proposition}
For any graph $G$, we always have 
\begin{equation*}
\textsf{ch}(G) \leq \chi_{\textsf{P}}(G) \leq AT(G), 
\end{equation*}
where $\chi_{\textsf{P}}(G)$ is the paint number of $G$. 
\end{proposition}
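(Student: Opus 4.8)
The statement is the chain $\textsf{ch}(G) \le \chi_{\textsf{P}}(G) \le AT(G)$, and I would establish the two inequalities separately. For $\textsf{ch}(G) \le \chi_{\textsf{P}}(G)$, the plan is to argue that a winning Painter strategy in the online game is stronger than offline list colouring. Recall that $\chi_{\textsf{P}}(G) \le k$ means: in the painting game each vertex starts with $k$ tokens; in each round the Lister marks a nonempty set $M$ of still-uncoloured vertices (each losing one token), the Painter colours an independent subset $I \subseteq M$, and the Painter wins if every vertex is coloured before its tokens run out. Given any $k$-list assignment $L$, I would simulate this game: enumerate the colours in $\bigcup_{v} L(v)$ as $c_1, c_2, \dots$, and in round $i$ let the Lister mark $M_i = \{v : c_i \in L(v),\ v \text{ uncoloured}\}$ while the Painter follows its winning strategy, using $c_i$ on the independent set it selects. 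Each vertex $v$ is marked at most $|L(v)| = k$ times, so the winning strategy colours it, and since $v$ is coloured only in a round $i$ with $c_i \in L(v)$ it receives a colour of $L(v)$; the coloured sets being independent, the result is a proper $L$-colouring. Hence $k$-paintability implies $k$-choosability, giving $\textsf{ch}(G) \le \chi_{\textsf{P}}(G)$.

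For $\chi_{\textsf{P}}(G) \le AT(G)$ I would pass through the graph polynomial. Write $k = AT(G)$ and fix an AT-orientation $D$ with $\Delta^{+}(D) \le k-1$ and $\mathrm{diff}(D)\ne 0$, and set $P_G = \prod_{u\to v \in D}(x_u - x_v)$, which is homogeneous of degree $|E(G)|$. Expanding each factor by selecting either the tail or the negated head, the monomial $\prod_{v} x_v^{d_D^{+}(v)}$ arises exactly from those selections that reverse an Eulerian sub-digraph of $D$, carrying the sign $(-1)^{|R|}$ where $R$ is the reversed arc set; summing over these contributions gives
\[
\Big[\textstyle\prod_{v} x_v^{d_D^{+}(v)}\Big] P_G = |\ee(D)| - |\oe(D)| = \mathrm{diff}(D) \ne 0 .
\]
Thus $P_G$ has a (necessarily top-degree) monomial $\prod_{v} x_v^{t_v}$ with nonzero coefficient and $t_v = d_D^{+}(v) \le k-1$ for every $v$. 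It remains to turn this algebraic fact into a Painter strategy, which is Schauz's online Combinatorial Nullstellensatz.

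To upgrade the algebra to the online setting I would track, as the Painter plays, the polynomial $Q$ obtained from $P_G$ by replacing each coloured vertex's variable with its assigned colour, maintaining an exponent vector $t$ on the uncoloured vertices with $[x^t]Q\ne0$, $\sum_v t_v = \deg Q$ (so $x^t$ stays top-degree), and $t_v \le f(v)-1$ for the running token counts $f$; initially $Q = P_G$ and $t_v = d_D^{+}(v)$. The one-vertex case already exhibits the mechanism: if the Lister marks a single $v$ with a fresh colour $c$ and the Painter substitutes $x_v = c$, then writing $Q = \sum_{j} x_v^{\,j} Q_j$ one gets $[x^{t'}]Q|_{x_v=c} = \sum_{j\le t_v} c^{\,j}[x^{t'}]Q_j$, a polynomial in $c$ of degree exactly $t_v$ (its leading coefficient is $[x^t]Q\ne0$, and larger $j$ drop out for degree reasons). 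It has at most $t_v$ roots, so among the distinct colours with which $v$ can be marked within its budget $f(v)\ge t_v+1$, at least one keeps the tracked coefficient nonzero, and the Painter colours $v$ there. Moreover, colouring two adjacent vertices with one colour annihilates the factor $(x_u - x_v)$ and kills $Q$, so insisting that the tracked coefficient stay nonzero forces the coloured set to be independent for free.

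The main obstacle is the general inductive step, where the Lister marks a whole set $M$ at once: the Painter must split $M$ into a part to colour now and a part to carry forward so that the updated $Q$ still has a nonvanishing top-degree coefficient respecting all the reduced budgets, and must do so compatibly with independence. Turning the one-vertex root count into a simultaneous choice over $M$, and verifying that the three invariant conditions (nonvanishing coefficient, top-degree, $t_v \le f(v)-1$) persist after every round, is precisely the content of Schauz's ``Mr. Paint and Mrs. Correct'' lemma. Granting that lemma, the invariant keeps $Q\ne0$ until the last vertex is coloured, every vertex is coloured within its $AT(G)$ tokens, and the Painter wins; hence $G$ is $AT(G)$-paintable and $\chi_{\textsf{P}}(G) \le AT(G)$, which with the first inequality completes the asserted chain.
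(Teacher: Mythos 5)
The paper does not actually prove this proposition: it imports it wholesale from Schauz \cite{MR2578908}, so there is no internal argument to compare against. Your proposal reconstructs the standard literature proof, and it is correct as far as it goes. The simulation argument for $\textsf{ch}(G) \leq \chi_{\textsf{P}}(G)$ is complete and sound: each vertex $v$ is marked exactly once per colour of $L(v)$ while uncoloured, so an uncoloured vertex surviving all rounds would exhaust its $k$ tokens and defeat the winning Painter strategy, a contradiction. Your coefficient computation for the second inequality is also right: choosing the negated head on exactly the arcs of $R$ yields the monomial $\prod_v x_v^{d_D^+(v)}$ precisely when $d_R^+(v) = d_R^-(v)$ for all $v$, giving $\bigl[\prod_v x_v^{d_D^+(v)}\bigr]P_G = |\ee(D)| - |\oe(D)| = \mathrm{diff}(D) \neq 0$, and your observations that top-degreeness kills the terms with $j > t_v$ and that independence of the coloured set comes for free (a monochromatic edge zeroes a factor of $Q$) are exactly the mechanisms in Schauz's proof. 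The one step you do not prove --- the multi-vertex round, where the Painter must split the marked set $M$ so that the three invariants persist --- is openly delegated to Schauz's paintability Nullstellensatz \cite{MR2515754}, which is in substance the same external result the paper itself cites; so your write-up is a strictly more detailed account of the cited route rather than a genuinely different one, and modulo that single invoked lemma it is a valid proof. One small refinement worth noting for the single-vertex discussion: since rounds carry no intrinsic colours, the Painter may assign each round a fresh field element of its own choosing, so it can simply pick a value avoiding the at most $t_v$ roots rather than waiting for a non-root to appear, which removes any worry about the budget slack when $f(v) = t_v + 1$.
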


Schauz \cite{MR2515754} proved that every planar graph has paint number at most five, and Zhu \cite{MR3906645} proved that every planar graph has Alon-Tarsi number at most five. Zhu \cite{MR4152773} also proved that every planar graph $G$ has a matching $M$ such that $AT(G - M) \leq 4$.

In \cref{sec5}, we prove that every toroidal graph without $K_{5}^{-}$ and $6$-cycles has Alon-Tarsi number at most four.

\begin{theorem}\label{AT4}
Every toroidal graph without $K_{5}^{-}$ and $6$-cycles has Alon-Tarsi number at most four. 
\end{theorem}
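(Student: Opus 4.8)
The plan is to argue by contradiction on a minimal counterexample and to reduce the whole statement to a few finite checks on the configurations of \cref{SR}. Recall that $AT(G)\le 4$ is equivalent to the existence of an orientation $D$ of $G$ with $\Delta^{+}(D)\le 3$ and $\mathrm{diff}(D)\neq 0$. So let $G$ be a toroidal graph without $K_{5}^{-}$ and $6$-cycles minimizing $|V(G)|+|E(G)|$ among those with $AT(G)\ge 5$; we may assume $G$ is connected. The engine of the reduction is the following multiplicativity lemma. If $S\subseteq V(G)$, $G'=G-S$, $D'$ is an orientation of $G'$ and $D_{H}$ is an orientation of the induced subgraph $H=G[S]$, then orienting \emph{every} edge between $S$ and $G'$ from $S$ to $G'$ produces an orientation $D$ of $G$ with $\mathrm{diff}(D)=\mathrm{diff}(D_{H})\cdot\mathrm{diff}(D')$. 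Indeed, since all cut-arcs point the same way, no directed cycle can cross the cut $(S,V(G'))$, so every spanning Eulerian sub-digraph of $D$ splits as a disjoint union of one using only arcs of $D_{H}$ and one using only arcs of $D'$; sorting the resulting pairs by the parity of their total number of arcs gives exactly $(|\ee(D_H)|-|\oe(D_H)|)(|\ee(D')|-|\oe(D')|)$.

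Now I apply \cref{SR}. In case 1, take $H$ to be a single vertex $v$ of degree at most $3$: all its edges become cut-arcs oriented outward, so $d^{+}_{D}(v)\le 3$, while $\mathrm{diff}(D_{H})=1$. In cases 2--4, take $H$ to be the induced configuration of \cref{K5--}, \cref{Kite}, or \cref{House}, all of whose vertices are $4$-vertices. In every case $G'=G-S$ is a strictly smaller toroidal graph without $K_{5}^{-}$ and $6$-cycles, so by minimality $AT(G')\le 4$ and I may fix $D'$ with $\Delta^{+}(D')\le 3$ and $\mathrm{diff}(D')\neq 0$; orienting the cut from $S$ to $G'$ leaves the out-degrees in $G'$ untouched (these vertices only acquire in-arcs). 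It therefore suffices to orient $H$ so that (i) $\mathrm{diff}(D_{H})\neq 0$, and (ii) each $v\in S$ satisfies $d^{+}_{D_{H}}(v)\le d_{H}(v)-1$; for then $d^{+}_{D}(v)=d^{+}_{D_{H}}(v)+(4-d_{H}(v))\le 3$, since a $4$-vertex of internal degree $d_{H}(v)$ contributes exactly $4-d_{H}(v)$ outgoing cut-arcs. With such a $D_{H}$ the lemma yields $\mathrm{diff}(D)=\mathrm{diff}(D_{H})\cdot\mathrm{diff}(D')\neq 0$ together with $\Delta^{+}(D)\le 3$, whence $AT(G)\le 4$, contradicting the choice of $G$.

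The whole theorem thus comes down to verifying, for each of the three configurations, the existence of an orientation $D_{H}$ with $\mathrm{diff}(D_{H})\neq 0$ in which \emph{every} vertex has in-degree at least one (condition (ii) is precisely this). This is the crux, and it is genuinely more delicate than the weak-degeneracy argument: the in-degree condition forbids a source, so $D_{H}$ must contain directed cycles and cannot be acyclic, which means the usual shortcut "an acyclic orientation has $\mathrm{diff}=\pm1$" is unavailable. Instead, for each configuration I will exhibit an explicit orientation assembled from a few consistently directed cycles and compute $\mathrm{diff}(D_{H})$ by directly enumerating its spanning Eulerian sub-digraphs; as each configuration has at most five vertices, this is a finite check. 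For instance, a suitable orientation of the Kite of \cref{Kite} has exactly three spanning Eulerian sub-digraphs -- the empty one, a directed triangle, and the directed outer $4$-cycle -- giving $\mathrm{diff}(D_{H})=2-1=1\neq 0$, and the House of \cref{House} behaves the same way. The densest and most delicate instance is the configuration of \cref{K5--}, where many directed cycles coexist and the risk of cancellation between even and odd Eulerian sub-digraphs is real; the main labor of the proof is choosing its orientation and confirming that $|\ee(D_{H})|\neq|\oe(D_{H})|$. Once these three verifications are in place, the minimal-counterexample argument closes and $AT(G)\le 4$ follows.
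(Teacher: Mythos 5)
Your proposal is essentially the paper's own proof: the same minimal-counterexample setup, the same use of \cref{SR}, the same device of orienting every edge from the configuration $\Gamma$ to $G-V(\Gamma)$, and your multiplicativity identity $\mathrm{diff}(D)=\mathrm{diff}(D_{H})\cdot\mathrm{diff}(D')$ is exactly the lemma of Lu and Zhu (\cref{L}) that the paper invokes; your one-cut argument for it (no Eulerian sub-digraph can use an arc of a one-directional cut) is correct, and your degree bookkeeping $d^{+}_{D}(v)=d^{+}_{D_{H}}(v)+(4-d_{H}(v))\le 3$ matches the paper's. Your Kite and House orientations coincide in substance with the paper's \cref{Oriented}.

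The one place you stop short of a proof is the configuration of \cref{K5--}, where you only promise an orientation; since this is, by your own account, the crux, it should be exhibited. The check is in fact easier than you fear: because $G$ has no $K_{5}^{-}$, the induced configuration of \cref{K5--} is $K_{5}$ minus the two edges at one vertex (eight edges: the pentagon $v_{1}v_{2}v_{3}v_{4}v_{5}$ together with the chords $v_{1}v_{3}$, $v_{3}v_{5}$, $v_{1}v_{4}$), not a nine-edge $K_{5}^{-}$. Orient $v_{1}\to v_{2}\to v_{3}\to v_{4}\to v_{1}$, $v_{3}\to v_{1}$, and $v_{1}\to v_{5}$, $v_{3}\to v_{5}$, $v_{4}\to v_{5}$, which is precisely the paper's \cref{Oriented}(a). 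Then $v_{5}$ is an internal sink, so no directed cycle passes through it, and the only directed cycles are the triangle $v_{1}\to v_{2}\to v_{3}\to v_{1}$ (odd) and the $4$-cycle $v_{1}\to v_{2}\to v_{3}\to v_{4}\to v_{1}$ (even); they share arcs, so the spanning Eulerian sub-digraphs are the empty one, the triangle, and the $4$-cycle, giving $\mathrm{diff}(D_{H})=2-1=1\neq 0$ with no cancellation to worry about. Every vertex has in-degree at least one (in-degrees $2,1,1,1,3$), so your condition (ii) holds: internal out-degrees are $2,1,3,2,0$, and adding the $4-d_{H}(v)$ outgoing cut-arcs keeps every out-degree at most three. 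With this verification inserted, your argument is complete and is the same proof as the paper's.
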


The paper is structured as follows. In \cref{sec2}, we present some structural results on connected toroidal graphs with $K_{5}^{-}$ and $6$-cycles. In \cref{sec3}, we prove the structural description \cref{SR} using the discharging method. In \cref{sec4}, we study weak degeneracy, and prove \cref{WD}. In \cref{sec5}, we consider the Alon-Tarsi number, and prove \cref{AT4}. Finally, we discuss the sharpness of our results in the last section. 

\section{Preliminary results}
\label{sec2}

In this section, we present some preliminary results related to the properties of a connected toroidal graph $G$ embedded on the torus.

Let $G$ be a connected toroidal graph that is embedded on the torus. A $k$-vertex is a $(d_{1}, d_{2}, \dots, d_{k})$-vertex if its incident faces have sizes $d_{i}$ in a cyclic order. We classify $4$-vertices into three types: \emph{bad}, \emph{special}, and \emph{good}. A $4$-vertex is \emph{bad} if it is a $(3, 3, 3, 3^{+})$-vertex, \emph{special} if it is a $(3, 3, 4, 6^{+})$-vertex, and \emph{good} otherwise. We use $s(f)$ to denote the number of special vertices incident with a face $f$, and $n_{4b}(v)$ to denote the number of bad vertices adjacent to a vertex $v$. A face is \emph{light} if all its incident vertices are $4$-vertices in $G$. Similar definitions can be applied to the concept of \emph{light cycles}. We say that a vertex $v$ is a $(d_{1}, d_{2}, \dots, d_{l}, \dots)$-vertex if $v$ is an $l^{+}$-vertex and consecutively incident with faces $f_{1}, f_{2}, \dots, f_{l}$ such that the size of $f_{i}$ is $d_{i}$ for all $1 \leq i \leq l$. We use $m_{i}(v)$ to denote the number of $i$-faces incident with a vertex $v$, and $n_{j}(f)$ to denote the number of $j$-vertices incident with a face $f$. We use $b(f)$ to denote the facial boundary of a face $f$. For two faces, we can define \emph{adjacent} and \emph{normally adjacent} by considering the boundaries of the two faces. 

We establish several important properties of the graph $G$. 
\begin{lemma}\label{Lem1}
Let $G$ be a connected toroidal graph without $K_{5}^{-}$ and $6$-cycles. If the minimum degree is at least 4, then the following properties hold:
\begin{enumerate}[label = (\roman*)]
\item\label{3333k} There are no $(3, 3, 3, 3, k, \dots)$-vertices for any positive integer $k$. 

\item\label{3FVF5} For a $5$-face $f= [x_{1}x_{2}x_{3}x_{4}x_{5}]$, if $f$ is adjacent to a $3$-face $[x_{1}x_{5}x]$, then $x = x_{3}$.  

\item\label{3FVF4} If $f_{1}$ and $f_{2}$ are adjacent in $G$, where $f_{1}$ is a $3$-face and $f_{2}$ is a $4$-face, then $f_{1}$ and $f_{2}$ must be normally adjacent.  

\item\label{6F} (\cite[Proposition 2.3]{MR3634481}) If $f$ is a $6$-face and $x, y, z$ are consecutive vertices on $b(f)$, then the following hold:
      \begin{enumerate}
      \item $b(f)$ consists of two triangles. 
      \item If $y$ is not incident with $f$ twice, then $xz \in E(G)$. 
      \end{enumerate}

\item\label{4FVF4} (\cite[Proposition 2.4]{MR3634481}) Let $[x_{1}x_{2}x_{3}x_{4}]$ and $[x_{1}x_{2}y_{3}y_{4}]$ be two adjacent $4$-faces. Then $|\{x_{3}, x_{4}\} \cap \{y_{3}, y_{4}\}| = 1$. Moreover, if $y_{4} \notin \{x_{1}, x_{2}, x_{3}, x_{4}\}$, then $y_{3} = x_{4}$. As a consequence, there are no normally adjacent $4$-faces. 

\item\label{343dots} (\cite[Claim 2.5]{MR3634481}) There are no $(3, 4, 3, \dots)$-vertices.

\item\label{353dots} (\cite[Claim 2.5]{MR3634481}) There are no $(3, 5, 3, \dots)$-vertices.

\item\label{3346P} (\cite[Claim 2.6]{MR3634481}) There are no $(3, 3, 4, 5^{-}, \dots)$-vertices. 

\item\label{4434k} (\cite[Claim 2.8]{MR3634481}) There are no $(4, 4, 3, 4, k)$-vertices for any positive integer $k$. 

\item\label{ONEW} (\cite[Claim 2.11]{MR3634481}) Each $4$-face is incident with at most one special $4$-vertex. 

\item\label{345P4} (\cite[Claim 2.12]{MR3634481}) If $v$ is a $(3, 4, 5^{+}, 4)$-vertex, then none of the two $4$-faces is incident with a special $4$-vertex. 

\item\label{3333} (\cite[Claim 2.16]{MR3634481}) If $v$ is a $(3, 3, 3, 3)$-vertex, then each of the four $3$-faces is incident with a $7^{+}$-face.

\item\label{3336} (\cite[Corollaries 2.19 and 2.20]{MR3634481}) If $v$ is a $(3, 3, 3, k)$-vertex with $k \geq 4$, then $k \geq 6$ and each edge not incident with $v$ but on the $3$-faces is incident with a $6^{+}$-face. Moreover, if $k = 6$, then each edge not incident with $v$ but on the $3$-faces is incident with a $7^{+}$-face. 

\item\label{NABAD} (\cite[Corollary 2.22]{MR3634481}) There are no adjacent bad vertices.

\item\label{444dots} There are no $(4, 4, 4, \dots)$-vertices.

\item\label{3k4l} Let $v$ be a $(3, k, 4, l)$-vertex with $k, l \geq 4$. Then 
      \begin{enumerate}
      \item $k, l \neq 5$ and $\max\{k, l\} \geq 6$; and 
      \item none of the two $4$-faces can be incident with a special $4$-vertex if $k = 4$. 
      \end{enumerate}
\item\label{335k} (\cite[Claim 2.14]{MR3634481}) Let $v$ be a $(3, 3, 5, k)$-vertex with $k \geq 5$. Then $k \geq 7$. Furthermore, if there is no induced subgraph isomorphic to the configuration in \cref{K5--}, then the $5$-face is incident with a $5^{+}$-vertex. 

\item\label{3454}
Let $v$ be a $(3, 4, 5, 4)$-vertex incident with faces $f_{1} = [v_{1}vv_{2}]$, $f_{2} = [v_{2}vv_{3}x]$, $f_{3} = [v_{3}vv_{4}pq]$ and $f_{4} = [v_{4}vv_{1}y]$. If $v_{3}$ is a $4$-vertex, then $v_{3}$ is a $(4^{+}, 4^{+}, 4^{+}, 4^{+})$-vertex. 

\item\label{m64b}
For each vertex $v$, we always have $m_{6^{+}}(v) \geq n_{4b}(v)$. 

\end{enumerate}
\end{lemma}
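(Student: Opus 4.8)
The plan is to exhibit, for each vertex $v$, an injection from the bad vertices adjacent to $v$ into the $6^{+}$-faces incident with $v$, which gives $m_{6^{+}}(v) \ge n_{4b}(v)$. Write $u_{1}, \dots, u_{d}$ for the neighbours of $v$ in cyclic order and $f_{1}, \dots, f_{d}$ for the incident faces, with $f_{i}$ lying between $u_{i-1}$ and $u_{i}$ (indices mod $d$), so that the two faces meeting the edge $vu_{i}$ are $f_{i}$ and $f_{i+1}$. First I would record the elementary observation that for any bad neighbour $u_{i}$ at least one of $f_{i}, f_{i+1}$ is a $3$-face: a bad vertex carries at least three $3$-faces among its four incident faces, and $f_{i}, f_{i+1}$ are precisely the faces of $u_{i}$ on the edge $vu_{i}$, so if neither were a triangle the remaining two faces of $u_{i}$ could supply at most two triangles, a contradiction.

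The engine is a propagation statement, proved in both directions: \emph{if $u_{i}$ is bad and $f_{i} = [v\,u_{i-1}\,u_{i}]$ is a $3$-face, then $f_{i-1}$ is a $6^{+}$-face} (symmetrically, if $f_{i+1} = [v\,u_{i}\,u_{i+1}]$ is a $3$-face then $f_{i+2}$ is a $6^{+}$-face). There are two cases for the bad vertex $u_{i}$, both reducing to \cref{Lem1}. If $u_{i}$ is a $(3,3,3,k)$-vertex with $k \ge 4$, then $k \ge 6$ by property~\ref{3336}, and since the edge $vu_{i-1}$ lies on the $3$-face $f_{i}$ but is not incident with $u_{i}$, the same property forces the \emph{other} face on $vu_{i-1}$, namely $f_{i-1}$, to be a $6^{+}$-face. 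If instead $u_{i}$ is a $(3,3,3,3)$-vertex, then by property~\ref{3333} the $3$-face $f_{i}$ is adjacent to a $7^{+}$-face; but the faces across the edges $vu_{i}$ and $u_{i-1}u_{i}$ of $f_{i}$ are both incident with $u_{i}$ and hence are $3$-faces, so the $7^{+}$-face must be the remaining neighbour $f_{i-1}$ across $vu_{i-1}$. Combining propagation with the elementary observation, every bad neighbour $u_{i}$ acquires \emph{two} candidate $6^{+}$-faces incident with $v$: if exactly one of $f_{i}, f_{i+1}$ is a triangle then the other is itself a $6^{+}$-face (again by property~\ref{3336}, as $u_{i}$ is then a $(3,3,3,\ge 6)$-vertex whose large face sits on $vu_{i}$) while propagation supplies a second; and if both are triangles, propagation in the two directions supplies two.

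It remains to select, for each bad neighbour, one of its two candidates so that the choice is injective, which I would model as a left-perfect matching in the bipartite graph joining each bad neighbour to its two candidate faces; equivalently, viewing each bad neighbour as an edge between its two candidates, one must show the resulting multigraph on the $6^{+}$-faces of $v$ is a pseudoforest, so that Hall's condition holds. The hypotheses enter here. The candidate pairs are \emph{local}, lying within three consecutive positions of the cyclic sequence $f_{1}, \dots, f_{d}$, so only bad neighbours in a short arc can compete for a fixed face; and property~\ref{NABAD} forbids adjacent bad vertices, while two consecutive neighbours $u_{i}, u_{i+1}$ are adjacent exactly when $f_{i+1}$ is a $3$-face. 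A short check shows two distinct bad neighbours can never present the \emph{same} candidate pair, since the only way this could happen forces a triangle between them and hence an illegal pair of adjacent bad vertices, so the multigraph is simple. I expect the main obstacle to be the remaining part of Hall's condition: ruling out a short cyclic window that contains more bad neighbours than available $6^{+}$-faces. The delicate subcase is a single $6^{+}$-face that is the forced large face of a bad neighbour on one side and simultaneously the propagation target of another on the other side; resolving it uses the reassignment freedom of the two candidates together with property~\ref{NABAD} and the absence of $6$-cycles (through the $k=6$ refinement of property~\ref{3336} and the structure of $6$-faces in property~\ref{6F}), which pin the face sizes down tightly enough that no window can be overloaded.
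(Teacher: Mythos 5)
Your propagation engine is exactly the one in the paper: the paper's proof also splits according to whether the edge $vu_{i}$ lies on one or on two $3$-faces, and in the first case uses \cref{Lem1}\ref{3336} (the big face of the $(3,3,3,k)$-vertex $u_{i}$ plus the $6^{+}$-face across the third edge of the triangle), in the second case uses \cref{Lem1}\ref{3333}/\ref{3336} to force the two outer faces $f_{i-1}$ and $f_{i+2}$ to be $6^{+}$-faces, with \cref{Lem1}\ref{NABAD} controlling how bad neighbours can sit next to each other. Up to that point your write-up is correct and coincides with the paper.

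The gap is in your final selection step, and it is twofold. First, your ``short check'' that the auxiliary multigraph is simple is not valid: at a $4$-vertex $v$ of type $(6^{+},3,6^{+},3)$, two opposite bad neighbours $u_{1}$ and $u_{3}$ (each with its unique triangle at $v$ in the forward position) present the \emph{same} candidate pair $\{f_{1},f_{3}\}$, and nothing forces $u_{1}$ and $u_{3}$ to be adjacent, so \cref{Lem1}\ref{NABAD} does not exclude this doubled edge; the doubled edge happens to be harmless (a two-edge, two-vertex component still satisfies Hall), but the claim as stated fails. Second, and more seriously, the actual verification of Hall's condition --- ruling out an ``overloaded window'' --- is never carried out; you only express the expectation that $6$-cycle-freeness pins down the face sizes, without an argument. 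In fact no Hall/pseudoforest machinery and no appeal to forbidden $6$-cycles is needed here: simply assign each bad neighbour its \emph{forward} candidate, namely $u_{i} \mapsto f_{i+2}$ when $f_{i+1}$ is a triangle, and $u_{i} \mapsto f_{i+1}$ when only $f_{i}$ is a triangle (in your indexing, with $f_{i}$ between $u_{i-1}$ and $u_{i}$). Any collision between distinct bad neighbours forces, by index arithmetic, the face between two consecutive neighbours to be a triangle, hence two adjacent bad vertices, contradicting \cref{Lem1}\ref{NABAD}; so the assignment is injective and $m_{6^{+}}(v) \geq n_{4b}(v)$ follows at once. This greedy injection is what the paper's terse closing sentence amounts to, and substituting it for your unfinished matching analysis closes your proof.
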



\begin{proof}[Proof of \cref{Lem1}\ref{444dots}]
Assume that a vertex $v$ is incident with three consecutive $4$-faces, say $f_{1} = [vv_{1}xv_{2}]$, $f_{2} = [vv_{2}yv_{3}]$, and $f_{3} = [vv_{3}zv_{4}]$. Consider the first case that $y \notin \{v_{1}, v_{4}\}$. Since $f_{1}$ and $f_{2}$ are adjacent $4$-faces, we have $x = v_{3}$ by \cref{Lem1}\ref{4FVF4}. Similarly, we have $z = v_{2}$. In this case, there is a $6$-cycle $vv_{4}v_{2}yv_{3}v_{1}v$, a contradiction. The other case is that $y \in \{v_{1}, v_{4}\}$. By symmetry, we may assume that $y = v_{1}$. Note that \cref{Lem1}\ref{4FVF4} implies that $x \neq v_{3}$. Similarly, we have $z = v_{2}$ since $f_{2}$ and $f_{3}$ are two adjacent $4$-faces. If $x \neq v_{4}$, then there is a $6$-cycle $vv_{4}v_{2}xv_{1}v_{3}v$, a contradiction. If $x = v_{4}$, then $G[\{v, v_{1}, v_{2}, v_{3}, v_{4}\}]$ contains a $K_{5}^{-}$, a contradiction. This completes the proof of \cref{Lem1}\ref{444dots}.
\end{proof}

\begin{proof}[Proof of \cref{Lem1}\ref{3k4l}]
Let $v_{1}, v_{2}, v_{3}, v_{4}$ be the four neighbors of $v$ in a cyclic order, and let $f_{1} = [v_{1}vv_{2}]$ be a $3$-face, and $f_{3} = [v_{3}vv_{4}x]$ be a $4$-face. Let $f_{2} = [v_{2}vv_{3}\dots]$ and $f_{4} = [v_{4}vv_{1}\dots]$. 
      
      Assume that $f_{4} = [v_{4}vv_{1}zw]$ is a $5$-face. By \cref{Lem1}\ref{3FVF5}, $f_{1}$ and $f_{4}$ are not normally adjacent, and $w = v_{2}$. If $x \notin \{v_{1}, v_{2}\}$, then there is a $6$-cycle $v_{1}v_{2}v_{4}xv_{3}vv_{1}$, a contradiction. Then we have $x \in \{v_{1}, v_{2}\}$. Since $v_{4}$ is a $4^{+}$-vertex, we have $x \neq w$. It follows that $x \neq v_{2}$, implying $x = v_{1}$. If $z \neq v_{3}$, then there is a $6$-cycle $vv_{3}v_{1}zv_{2}v_{4}v$, a contradiction. This implies that $z = v_{3}$, but $G[\{v, v_{1}, v_{2}, v_{3}, v_{4}\}]$ contains a $K_{5}^{-}$, a contradiction. Therefore, by symmetry, neither $f_{2}$ nor $f_{4}$ can be a $5$-face. By \cref{Lem1}\ref{444dots}, at most one of $f_{2}$ and $f_{4}$ can be a $4$-face. Hence, $\max\{k, l\} \geq 6$. 
      
      In what follows, we assume that $f_{2} = [v_{2}vv_{3}y]$ is a $4$-face. Assume that $x \notin \{v_{1}, v_{2}\}$. By \cref{Lem1}\ref{4FVF4}, $f_{2}$ and $f_{3}$ are not normally adjacent, and $y = v_{4}$. Then there is a $6$-cycle $v_{1}vv_{3}xv_{4}v_{2}v_{1}$, a contradiction. So we may assume that $x \in \{v_{1}, v_{2}\}$. Note that the $3$-face $f_{1}$ and the $4$-face $f_{2}$ are normally adjacent. Suppose that $x = v_{1}$. It follows that $x \neq v_{2}$. By \cref{Lem1}\ref{4FVF4}, $f_{2}$ and $f_{3}$ are not normally adjacent, and $y = v_{4}$, but $G[\{v, v_{1}, v_{2}, v_{3}, v_{4}\}]$ contains a $K_{5}^{-}$, a contradiction. This implies that $x = v_{2}$, and hence $y \neq v_{4}$ by \cref{Lem1}\ref{4FVF4}. 
      
      By \cref{Lem1}\ref{343dots}, $v_{2}y$ cannot be incident with a $3$-face. Suppose that $v_{3}y$ is incident with a $3$-face $[v_{3}yz]$. If $z \neq v_{1}$, then there is a $6$-cycle $vv_{1}v_{2}yzv_{3}v$; otherwise, $z = v_{1}$ and $G[\{v, v_{1}, v_{2}, v_{3}, y\}]$ contains a $K_{5}^{-}$. Then $v_{3}y$ cannot be incident with a $3$-face. Since none of $v_{2}y$ and $v_{3}y$ is incident with a $3$-face, we have that $y$ is not a special vertex. Note that $x = v_{2}$, each of $x$ and $v_{2}$ is incident with two $4$-faces $f_{2}$ and $f_{3}$, thus none of $x$ and $v_{2}$ is a special vertex. Similarly, each of $v$ and $v_{3}$ is incident with two $4$-faces $f_{2}$ and $f_{3}$, thus none of $v$ and $v_{3}$ is a special vertex. Finally, we consider the vertex $v_{4}$. Suppose that $v_{4}x$ is incident with a $3$-face $[v_{4}xt]$. If $t \notin \{y, v_{3}\}$, then there is a $6$-cycle $vv_{3}yv_{2}tv_{4}v$, a contradiction. If $t = v_{3}$, then $x$ is a $2$-vertex, a contradiction. If $t = y$, then there is a $6$-cycle $vv_{1}v_{2}v_{4}yv_{3}v$, a contradiction. Then $v_{4}x$ is not incident with a $3$-face, and $v_{4}$ is incident with at most one $3$-face, so $v_{4}$ is not a special vertex. Therefore, none of $f_{2}$ and $f_{3}$ is incident with a special vertex. This completes the proof of \cref{Lem1}\ref{3k4l}.
\end{proof}

\begin{proof}[Proof of \cref{Lem1}\ref{335k}]
Let $v_{1}, v_{2}, v_{3}, v_{4}$ be the four neighbors of $v$ in a cyclic order so that $f_{1} = [v_{1}vv_{2}]$, $f_{2} = [v_{2}vv_{3}]$ be two $3$-faces, $f_{3} = [v_{3}vv_{4}xy]$ be a $5$-face, and $f_{4} = [v_{4}vv_{1}\dots]$ be a $5^{+}$-face. By \cref{Lem1}\ref{3FVF5}, we have that $x = v_{2}$.

Suppose that $f_{4} = [v_{4}vv_{1}zw]$ is a $5$-face. Consider the $3$-face $f_{1}$ and the $5$-face $f_{4}$, by \cref{Lem1}\ref{3FVF5}, we have that $w = v_{2}$. Then $x = w = v_{2}$, the vertex $v_{4}$ must be a $2$-vertex, a contradiction. Suppose that $f_{4}$ is a $6$-face. By \cref{Lem1}\ref{6F}, $b(f_{4})$ consists of two triangles. Note that $v$ cannot be incident with $f_{4}$ twice, then $v_{1}v_{4}$ is an edge. Note that $v_{1} \neq x$ since $x = v_{2}$. If $v_{1} \neq y$, then there is a $6$-cycle $vv_{1}v_{4}xyv_{3}v$, a contradiction. If $v_{1} = y$, then $G[\{v, v_{4}, x, y, v_{3}\}]$ contains a $K_{5}^{-}$, a contradiction. It follows that $f_{4}$ is a $7^{+}$-face, and $k \geq 7$. 

If $v_{1} \neq y$, then $x$ is adjacent to each of $y, v_{3}, v, v_{4}, v_{1}$, and then $d(x) \geq 5$. So we may assume that $v_{1} = y$. Then $G[\{v_{3}, v, v_{4}, x, y\}]$ contains a subgraph isomorphic to \cref{GI}. Note that neither $v_{3}v_{4}$ nor $v_{4}y$ is an edge, otherwise there is a $K_{5}^{-}$, a contradiction. Hence, $G[\{v_{3}, v, v_{4}, x, y\}]$ is isomorphic to \cref{GI}. Since there is no induced subgraph isomorphic to \cref{K5--}, the five vertices on $b(f_{3})$ cannot be all $4$-vertices, thus $f_{3}$ is incident with a $5^{+}$-vertex. This completes the proof of \cref{Lem1}\ref{335k}.
\end{proof}

\begin{proof}[Proof of \cref{Lem1}\ref{3454}]
Let $t$ be the fourth neighbor of $v_{3}$. Suppose that $v_{3}x$ is incident with a $3$-face. Then $t = v_{1}$, otherwise $vv_{1}v_{2}xtv_{3}v$ is a $6$-cycle. Since $t \neq q$ and $t = v_{1}$, we have that $p = v_{1}$, otherwise there is a $6$-cycle $vv_{1}v_{3}qpv_{4}v$. If $x \neq v_{4}$, then there is a $6$-cycle $vv_{4}v_{1}v_{2}xv_{3}v$. Then $x = v_{4}$, but now $G[\{v, v_{1}, v_{2}, v_{3}, v_{4}\}]$ contains a $K_{5}^{-}$, a contradiction. Hence, $v_{3}x$ is incident with two $4^{+}$-faces.

Suppose that $v_{3}q$ is incident with a $3$-face. By \cref{Lem1}\ref{3FVF5}, we have that $t = v_{4}$. Now, there is a $6$-cycle $vv_{1}v_{2}xv_{3}v_{4}v$, a contradiction. Hence, $v_{3}q$ is incident with two $4^{+}$-faces. Therefore, $v_{3}$ is incident with four $4^{+}$-faces. This completes the proof of \cref{Lem1}\ref{3454}.
\end{proof}

\begin{figure}%
\centering
\begin{tikzpicture}[line width = 1pt]
\def\s{1}
\foreach \ang in {1, 2, 3, 4, 5}
{
\def\pointname{v\ang}
\coordinate (\pointname) at ($(\ang*360/5-54:\s)$);}
\draw (v1)node[right]{$x$}--(v2)node[above]{$v_{4}$}--(v3)node[left]{$v$}--(v4)node[left]{$v_{3}$}--(v5)node[right]{$y$}--cycle;
\draw (v5)--(v3)--(v1)--(v4);
\foreach \ang in {1, 2, 3, 4, 5}
{
\node[circle, inner sep = 1.5, fill = white, draw] () at (v\ang) {};
}
\end{tikzpicture}
\caption{A subgraph.}
\label{GI}
\end{figure}
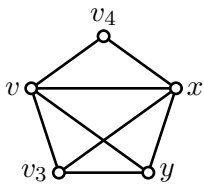

\begin{proof}[Proof of \cref{Lem1}\ref{m64b}]
Let $v_{1}, v_{2}, \dots, v_{d(v)}$ be the neighbors of $v$ in a cyclic order, and let $f_{i}$ be the face incident with $vv_{i}$ and $vv_{i+1}$ for $1 \leq i \leq d(v)$. Clearly, the inequality holds for $n_{4b}(v) = 0$. Let $v_{i}$ be a bad vertex. By the definition, $vv_{i}$ is incident with at least one $3$-face. 

The first case is that $vv_{i}$ is incident with exactly one $3$-face, say $f_{i}$. Then $v_{i}v_{i+1}$ is an edge. By \cref{Lem1}\ref{3336}, each of $f_{i-1}$ and $f_{i+1}$ is a $6^{+}$-face, and at least one of them is a $7^{+}$-face. \cref{Lem1}\ref{NABAD} implies that exactly one of $v_{i}$ and $v_{i+1}$, say $v_{i}$, is a bad vertex.

Now, consider the second case that $vv_{i}$ is incident with two $3$-faces. Then $v_{i-1}v_{i}$ and $v_{i}v_{i+1}$ are edges. By \cref{Lem1}\ref{3333}, each of $f_{i-2}$ and $f_{i+1}$ is a $6^{+}$-face. \cref{Lem1}\ref{NABAD} implies that exactly one of $v_{i-1}, v_{i}$ and $v_{i+1}$, say $v_{i}$, is a bad vertex.

By the above discussion, we can obtain that the number of incident $6^{+}$-faces is not less than the number of adjacent bad vertices. This completes the proof of \cref{Lem1}\ref{m64b}.
\end{proof}

\section{Proof of \cref{SR}}
\label{sec3}
In this section, we prove the structural description presented in \cref{SR}. Assume that $G$ is a counterexample to \cref{SR}. This means that none of the structures described in \cref{SR} exist in $G$. We will use the discharging method to derive a contradiction. 

Before proceeding with the discharging process, we will introduce the following lemma.

\begin{lemma}\label{3CVC4}
There is no subgraph (not necessarily induced) isomorphic to the configuration shown in \cref{House}. 
\end{lemma}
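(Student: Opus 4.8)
The plan is to argue by contradiction from the standing assumption of this section, namely that $G$ is a counterexample to \cref{SR}: the minimum degree is at least $4$, and $G$ contains no induced copy of the configurations in \cref{K5--}, \cref{Kite}, or \cref{House}; moreover $G$ has no $K_{5}^{-}$ and no $6$-cycle. Suppose, for contradiction, that $G$ has a (possibly non-induced) subgraph isomorphic to the House in \cref{House}, on distinct vertices $a,b,c,d,h$ with the six House edges $ah, hb, ab, bc, cd, da$, where all five are $4$-vertices in $G$. Since every one of these vertices has degree exactly $4$, every subset of them automatically satisfies the $4$-vertex markings required by the configurations in \cref{K5--} and \cref{Kite}; this is what will let me read off forbidden induced subgraphs. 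Because $G$ contains no induced House, the copy cannot be induced, so at least one of the four non-House pairs $ac, bd, hc, hd$ is an edge of $G$. I will also record the reflection $\sigma$ of the House given by $a\leftrightarrow b$, $c\leftrightarrow d$, $h\mapsto h$, which is an automorphism and exchanges $ac\leftrightarrow bd$ and $hc\leftrightarrow hd$; it halves the case analysis.

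The first step is an edge count. If at least three of $ac, bd, hc, hd$ are present, then $G[\{a,b,c,d,h\}]$ has at least $6+3=9$ edges on five vertices, hence contains a $K_{5}^{-}$ — contradicting $K_{5}^{-}$-freeness. So at most two of the four extra pairs are edges; combined with at least one, the number of extra edges is $1$ or $2$.

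The second step is a short case analysis on the presence of the diagonals $ac$ and $bd$ of the quadrilateral $abcd$. If exactly one of them is an edge, say $ac\in E(G)$ and $bd\notin E(G)$ (the other case being the $\sigma$-image), then $G[\{a,b,c,d\}]$ has exactly the edges $ab, bc, cd, da, ac$ and no $bd$, which is precisely an induced \cref{Kite} on four $4$-vertices — contradiction. If both $ac, bd\in E(G)$, then the extra-edge bound forces these to be the only two extra edges, so $hc, hd\notin E(G)$; now $\{a,b,c,d\}$ induces a $K_{4}$ and $h$ is adjacent to exactly the two endpoints $a,b$ of the edge $ab$, so $G[\{a,b,c,d,h\}]$ is an induced \cref{K5--} — contradiction. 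Finally, if neither $ac$ nor $bd$ is an edge, then the (nonempty) set of extra edges lies among $hc, hd$; if $hc\in E(G)$ then, since $ac\notin E(G)$, the set $\{a,b,c,h\}$ induces the edges $ha, ab, bc, ch, hb$ and no $ac$, i.e.\ an induced \cref{Kite}, and the case $hd\in E(G)$ is its $\sigma$-image. Every case yields a forbidden induced configuration, completing the contradiction.

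The main thing to get right — and the only mild subtlety — is that $6$-cycle-freeness is of no use here, because five vertices can never host a $6$-cycle; all the contradictions must instead be squeezed out of $K_{5}^{-}$-freeness and of the two already-forbidden induced configurations \cref{Kite} and \cref{K5--}. Consequently the work is entirely in verifying, case by case, that the auxiliary Kite or \cref{K5--} exhibited is genuinely \emph{induced} (that the relevant diagonal is truly absent in the case at hand) and that its $4$-vertex markings hold; the latter is automatic since $a,b,c,d,h$ all have degree $4$. I expect no real obstacle beyond this bookkeeping, and the reflection $\sigma$ keeps the number of distinct cases to three.
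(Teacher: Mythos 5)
Your proof is correct and takes essentially the same route as the paper's: both argue that, since no induced Kite exists, the quadrilateral has both diagonals or neither, and then reduce the first case to an induced copy of the configuration in \cref{K5--} (or a $K_{5}^{-}$) and the second to an induced House or an induced Kite through $h$. Your explicit edge count (nine or more edges on five vertices forces a $K_{5}^{-}$) and the reflection $\sigma$ are just tidier bookkeeping for the same case analysis, and your observation that $6$-cycle-freeness plays no role here is consistent with the paper's proof.
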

\begin{proof}
Suppose to the contrary that there is a subgraph isomorphic to the configuration shown in \cref{House}. Assume that $x_{1}x_{2}x_{3}x_{4}x_{1}$ is the $4$-cycle, $x_{1}x_{4}x_{5}x_{1}$ is the $3$-cycle, and all the five vertices are $4$-vertices in $G$. 
Since there is no induced subgraph isomorphic to the configuration shown in \cref{Kite}, we have that $x_{1}x_{3}, x_{2}x_{4} \notin E(G)$ or $x_{1}x_{3}, x_{2}x_{4} \in E(G)$. If $x_{1}x_{3}, x_{2}x_{4} \in E(G)$, then $G[\{x_{1}, x_{2}, x_{3}, x_{4}, x_{5}\}]$ is isomorphic to the configuration shown  in \cref{K5--} or it contains a $K_{5}^{-}$, a contradiction. If $x_{1}x_{3}, x_{2}x_{4} \notin E(G)$, then $G[\{x_{1}, x_{2}, x_{3}, x_{4}, x_{5}\}]$ is isomorphic to the configuration shown in \cref{House} or it contains a configuration shown in \cref{Kite}, both of which lead to contradictions. 
\end{proof}

We assign an initial charge $\omega(v) = d(v) - 6$ to each vertex $v \in V(G)$, and an initial charge $\omega(f) = 2d(f) - 6$ to each face $f \in F(G)$. By Euler's formula and Handshaking lemma, we can see that the sum of the initial charges is zero, \ie
\[
\sum_{x \in V(G) \cup F(G)} \omega(x) = \sum_{v \in V(G)} (d(v) - 6) + \sum_{f \in F(G)} (2d(f) - 6) = 0
\]

In the following, we design some appropriate discharging rules to redistribute the charges, obtaining a final charging function $\omega'$ on $V(G) \cup F(G)$, such that $\omega'(x) \geq 0$ for each $x \in V(G) \cup F(G)$. Moreover, there exists a vertex such that its final charge is positive. Note that the sum of the charges is preserved in the discharging procedure, then 
\[
0 = \quad \sum_{x \in V(G) \cup F(G)} \omega(x) = \sum_{x \in V(G) \cup F(G)} \omega'(x) \quad > 0,
\]
this contradiction completes the proof of \cref{SR}. 

Let $\epsilon$ and $\eta$ be sufficiently small positive real numbers, and let $0 < \lambda \leq \rho < 1$. We use the following discharging rules. 
\begin{enumerate}[label = \textbf{R\arabic*}, ref = R\arabic*]
\item\label{R1} Let $f$ be a $4$-face incident with a vertex $v$.
     \begin{enumerate}[label = \textbf{R1.\arabic*}]
     \item $\tau(f \rightarrow v) = \eta$ if $v$ is a $5^{+}$-vertex.
     \item Let $v$ be a $4$-vertex.
          \begin{enumerate}[label = \textbf{(\roman*)}]
          \item $\tau(f \rightarrow v) = 1$ if $v$ is special
          \item $\tau(f \rightarrow v) = \frac{2 - s(f) - \eta \times n_{5^{+}}(f)}{n_{4}(f) - s(f)}$ otherwise. 
          \end{enumerate}
     \end{enumerate}
\item\label{R2} Let $f$ be a $5$-face incident with a vertex $v$.
     \begin{enumerate}
     \item $\tau(f \rightarrow v) = \frac{1}{2}$ if $d(v) \geq 5$
     \item $\tau(f \rightarrow v) = \frac{4 - \frac{1}{2}n_{5^{+}}(f)}{n_{4}(f)}$ if $d(v) = 4$. 
     \end{enumerate}
\item\label{R3} Every $6^{+}$-face $f$ sends $\frac{2d(f) - 6}{d(f)}$ to each incident vertex. 
\item\label{R7} Every $(4^{+}, 4^{+}, 4^{+}, 4^{+})$-vertex sends $\frac{1}{20}$ to each adjacent vertex. 
\item\label{R4} Every good $4$-vertex sends its remaining charge uniformly to each adjacent bad $4$-vertex.
\item\label{R5} Every $5$-vertex sends $\lambda = \frac{5}{7} + \frac{\epsilon}{2}$ to each adjacent $(3, 3, 3, 3)$-vertex, and $\mu = \frac{4}{7} - \epsilon$ to each adjacent $(3, 3, 3, 4^{+})$-vertex. 
\item\label{R6} Every $6^{+}$-vertex sends $\rho$ to each adjacent bad $4$-vertex. 
\end{enumerate}

Firstly, we show that $\omega'(x) \geq 0$ for all $x \in V(G) \cup F(G)$. Note that $\omega'(f) = \omega(f) = 2 \times 3 - 6 = 0$ for each $3$-face $f$. By \cref{Lem1}\ref{ONEW}, every $4$-face is incident with at most one special $4$-vertex, then its final charge is at least zero by \ref{R1}. By \ref{R2} and \ref{R3}, each $5^{+}$-face has a nonnegative final charge.   

Let $v$ be a $6^{+}$-vertex. Note that $v$ is incident with at least one $4^{+}$-face by \cref{Lem1}\ref{3333k}. If $v$ is not adjacent to a bad $4$-vertex, then it does not send out any charge, but it receives at least $\eta$ from each incident $4^{+}$-face, thus $\omega'(v) \geq d(v) - 6 + \eta \geq \eta$. Now, assume that $v$ is adjacent to a bad $4$-vertex. By \cref{Lem1}\ref{m64b}, we have that $m_{6^{+}}(v) \geq n_{4b}(v)$. By \ref{R3} and \ref{R6}, $v$ receives at least $1$ from each incident $6^{+}$-face, and sends $\rho$ to each adjacent bad $4$-vertex. Then $\omega'(v) \geq d(v) - 6 + m_{6^{+}}(v) - \rho \times n_{4b}(v) > 0$. 

Let $v$ be a $5$-vertex. Then $\omega(v) = 5 - 6 = -1$. Suppose that $v$ is adjacent to at least three bad $4$-vertices. Then there are two adjacent bad $4$-vertices $v_{i}$ and $v_{i+1}$, say $v_{1}$ and $v_{2}$. By \cref{Lem1}\ref{NABAD}, $f_{1}$ is a $4^{+}$-face. This implies that both $f_{2}$ and $f_{5}$ are $3$-faces. By \cref{Lem1}\ref{NABAD}, neither $v_{3}$ nor $v_{5}$ is a bad $4$-vertex. \cref{Lem1}\ref{3336} implies that both $f_{3}$ and $f_{4}$ are $6^{+}$-faces. Then $v_{4}$ is not a bad vertex, and $v$ is adjacent to exactly two bad $4$-vertices, a contradiction. Hence, $v$ is adjacent to at most two bad $4$-vertices. By \cref{Lem1}\ref{3333}, $v$ is adjacent to at most one $(3, 3, 3, 3)$-vertex. Suppose that $v$ is adjacent to a $(3, 3, 3, 3)$-vertex, say $v_{1}$. Then $f_{1}$ and $f_{5}$ are $3$-faces. \cref{Lem1}\ref{3333} implies that both $f_{2}$ and $f_{4}$ are $7^{+}$-faces. By \ref{R3}, each of $f_{2}$ and $f_{4}$ sends at least $\frac{8}{7}$ to $v$. It follows that $\omega'(v) \geq -1 + \frac{8}{7} \times 2 - \lambda - \mu = \frac{\epsilon}{2} > 0$. So we may assume that $v$ is not adjacent to a $(3, 3, 3, 3)$-vertex, and it is adjacent to at most two $(3, 3, 3, 4^{+})$-vertices. 

\begin{itemize}
\item $\bm{n_{4b}(v) \geq 1}$. Then $v$ is incident with at least two $6^{+}$-faces by \cref{Lem1}\ref{3336}. By \ref{R3}, each incident $6$-face sends $1$ to $v$, and each incident $7^{+}$-face sends at least $\frac{8}{7}$ to $v$. If $v$ is adjacent to exactly one $(3, 3, 3, 4^{+})$-vertex, then $\omega'(v) \geq -1 + 1 \times 2 - \mu > 0$. Suppose that $v$ is adjacent to exactly two $(3, 3, 3, 4^{+})$-vertices. Note that $v$ is a $5$-vertex in $G$. By \cref{Lem1}\ref{3336}, there is a bad $4$-vertex $v_{i}$, say $v_{1}$, such that $vv_{1}$ is incident with exactly one $3$-face. Without loss of generality, let $f_{1}$ be a $3$-face. By \cref{Lem1}\ref{3336}, at least one of $f_{2}$ and $f_{5}$ is a $7^{+}$-face. It follows that $\omega'(v) \geq -1 + 1 + \frac{8}{7} - 2\mu = 2\epsilon > 0$. 

\item $\bm{n_{4b}(v) = 0}$. Note that $v$ is incident with at most three $3$-faces, otherwise there is a $6$-cycle in $G$. Then $v$ is incident with at least two $4^{+}$-faces. If $v$ is incident with a $6^{+}$-face, then it receives at least $1$ from an incident $6^{+}$-face, and at least $\eta$ from another $4^{+}$-face, this implies that $\omega'(v) \geq -1 + \eta + 1 > 0$. So we may assume that $v$ is not incident with a $6^{+}$-face. Then all the incident $4^{+}$-faces are $4$- or $5$-faces. By \ref{R1} and \ref{R2}, each incident $4$-face sends $\eta$ to $v$, and each incident $5$-face sends $\frac{1}{2}$ to $v$. By \cref{Lem1}\ref{343dots} and \ref{353dots}, all the incident $3$-faces are consecutive. 
     \begin{itemize}
     \item $\bm{m_{3}(v) = 3}$. Since all the incident $3$-faces are consecutive, we may assume that $f_{1}, f_{2}$ and $f_{3}$ are $3$-faces. By \cref{Lem1}\ref{3346P}, both $f_{4}$ and $f_{5}$ are $5$-faces. Let $f_{4} = [v_{4}vv_{5}xy]$ and $f_{5} = [v_{5}vv_{1}st]$ be the two $5$-faces. By \cref{Lem1}\ref{3FVF5}, we have that $x = v_{3}$ and $t = v_{2}$. Then there is a $6$-cycle $vv_{1}v_{2}v_{5}v_{3}v_{4}v$, a contradiction. 
     
     \item $\bm{m_{3}(v) = 2}$. Since all the incident $3$-faces are consecutive, we may assume that $f_{1}$ and $f_{2}$ are $3$-faces. By \cref{Lem1}\ref{3346P}, both $f_{3}$ and $f_{5}$ are $5$-faces. By \ref{R1} and \ref{R2}, each of $f_{3}$ and $f_{5}$ sends $\frac{1}{2}$ to $v$, and $f_{4}$ sends at least $\eta$ to $v$. Then $\omega'(v) \geq -1 + \frac{1}{2} \times 2 + \eta > 0$. 
     
     \item $\bm{m_{3}(v) = 1}$. Let $f_{1}$ be a $3$-face. If $v$ is incident with at least two $5$-faces, then $\omega'(v) \geq -1 + \frac{1}{2} \times 2 + 2\eta > 0$. Suppose that $v$ is incident with at most one $5$-face. By \cref{Lem1}\ref{444dots}, $f_{3}$ or $f_{4}$ is a $5$-face, and the other three faces are $4$-faces. But this contradicts \cref{Lem1}\ref{4434k}. 
     
     \item $\bm{m_{3}(v) = 0}$. By \cref{Lem1}\ref{444dots}, $v$ is incident with at most three $4$-faces, then it is incident with at least two $5$-faces. Then $\omega'(v) \geq -1 + \frac{1}{2} \times 2 + 3\eta > 0$. 
     \end{itemize}

\end{itemize}

Let $v$ be a $4$-vertex. Then $\omega(v) = 4 - 6 = -2$. If $v$ is a $(3, 3, 4, 6^{+})$-vertex, then the incident $4$-face sends $1$ to this special $4$-vertex $v$, and the incident $6^{+}$-face sends at least $1$ to $v$, thus 
\begin{equation}\label{EQ1}
\omega'(v) \geq -2 + 1 + 1 = 0. 
\end{equation}

Assume that $v$ is a $(3, 3, 3, 3)$-vertex. Note that $v_{1}v_{3}, v_{2}v_{4} \notin E(G)$, otherwise $G[\{v, v_{1}, v_{2}, v_{3}, v_{4}\}]$ contains a $K_{5}^{-}$. Since there is no induced subgraph isomorphic to the configuration in \cref{Kite}, $v$ is adjacent to at least two $5^{+}$-vertices. By \cref{Lem1}\ref{3333}, each of $v_{1}v_{2}, v_{2}v_{3}, v_{3}v_{4}$ and $v_{4}v_{1}$ is incident with a $7^{+}$-face. Then each $4$-vertex in $\{v_{1}, v_{2}, v_{3}, v_{4}\}$ is a good $4$-vertex. Moreover, each $4$-vertex in $\{v_{1}, v_{2}, v_{3}, v_{4}\}$ is adjacent to exactly one bad $4$-vertex, say $v$. By \ref{R4}, each adjacent good $4$-vertex sends at least $-2 + \frac{8}{7} \times 2 = \frac{2}{7}$ to $v$. Then $\omega'(v) \geq -2 + 2\lambda + \frac{2}{7} \times 2 = \epsilon > 0$. 

Assume that $v$ is a $(3, 3, 3, 4^{+})$-vertex. Let $f_{1}, f_{2}$ and $f_{3}$ be $3$-faces. By \cref{Lem1}\ref{3336}, $f_{4}$ is a $6^{+}$-face, and each of $v_{1}v_{2}, v_{2}v_{3}, v_{3}v_{4}$ is incident with a $6^{+}$-face. Then each of $v_{1}, v_{2}, v_{3}$ and $v_{4}$ is incident with at least two $6^{+}$-faces. By definition, each $4$-vertex in $\{v_{1}, v_{2}, v_{3}, v_{4}\}$ is a good $4$-vertex, each $4$-vertex in $\{v_{2}, v_{3}\}$ is adjacent to exactly one bad $4$-vertex, say $v$, and each $4$-vertex in $\{v_{1}, v_{4}\}$ is adjacent to at most two bad $4$-vertices. By \cref{Lem1}\ref{3336}, each of $v_{1}$ and $v_{4}$ is incident with a $7^{+}$-face. By \ref{R3} and \ref{R4}, each $4$-vertex in $\{v_{1}, v_{4}\}$ sends at least $(-2 + 1 + \frac{8}{7})/2 = \frac{1}{14}$ to $v$. By \ref{R5} and \ref{R6}, each $5^{+}$-vertex in $\{v_{1}, v_{2}, v_{3}, v_{4}\}$ sends at least $\mu$ to $v$. 
    \begin{itemize}
    \item \textbf{Assume that $f_{4}$ is a $6$-face}. Since $v$ cannot be incident with $f_{4}$ twice, we have that $v_{1}v_{4}$ is an edge in $G$. Moreover, $v_{1}$ or $v_{4}$ is incident with $f_{4}$ twice. It follows that $v_{1}$ or $v_{4}$, say $v_{1}$, is a $5^{+}$-vertex. Similar to the case that $v$ is a $(3, 3, 3, 3)$-vertex, we have that $v_{1}v_{3}, v_{2}v_{4} \notin E(G)$, and $v$ is adjacent to at least two $5^{+}$-vertices. By \cref{Lem1}\ref{3336}, each of $v_{1}v_{2}, v_{2}v_{3}$ and $v_{3}v_{4}$ is incident with a $7^{+}$-face. By \ref{R3} and \ref{R4}, each $4$-vertex in $\{v_{2}, v_{3}\}$ sends at least $-2 + \frac{8}{7} \times 2 = \frac{2}{7}$ to $v$. Therefore, $\omega'(v) \geq -2 + 1 + 2\mu + \frac{2}{7} + \frac{1}{14} = \frac{1}{2} - 2\epsilon > 0$. 
    
    \item \textbf{Assume that $f_{4}$ is a $7^{+}$-face}. By \ref{R3}, $f_{4}$ sends at least $\frac{8}{7}$ to $v$. If $v$ is adjacent to at least two $5^{+}$-vertices, then $\omega'(v) \geq -2 + \frac{8}{7} + 2\mu = \frac{2}{7} - 2\epsilon > 0$. So we may assume that $v$ is adjacent to at most one $5^{+}$-vertex. Since $G[\{v, v_{1}, v_{2}, v_{3}, v_{4}\}]$ contains no $K_{5}^{-}$, we have that $v_{1}v_{3} \notin E(G)$ or $v_{2}v_{4} \notin E(G)$. 
        \begin{itemize}
         \item \textbf{Suppose that $\bm{d(v_{2}) = d(v_{3}) = 4}$}. Without loss of generality, assume that $v_{2}v_{4} \notin E(G)$. Since there is no induced subgraph isomorphic to the configuration in \cref{Kite}, we have that $v_{4}$ is a $5^{+}$-vertex. It follows that $v_{1}$ is a $4$-vertex. Since $G[\{v, v_{1}, v_{2}, v_{3}\}]$ is not isomorphic to the configuration in \cref{Kite}, then $v_{1}v_{3}$ is an edge in $G$. Suppose that $v_{1}v_{2}$ is incident with a $6$-face $[v_{1}v_{2}u\dots v_{1}]$. Since $v_{2}$ is a $4$-vertex, we have that $v_{2}$ is incident with the $6$-face once. Then there is a $6$-cycle $vv_{1}uv_{2}v_{3}v_{4}v$, or $u = v_{4}$ and $v_{2}v_{4} \in E(G)$, a contradiction. Hence, $v_{1}v_{2}$ is incident with a $7^{+}$-face. Similarly, we can prove that each of $v_{2}v_{3}$ and $v_{3}v_{4}$ is incident with a $7^{+}$-face. Then each of $v_{1}, v_{2}, v_{3}$ and $v_{4}$ is incident with two $7^{+}$-faces. Moreover, each of $v_{2}$ and $v_{3}$ is a good $4$-vertex adjacent to exactly one bad $4$-vertex, say $v$. By \ref{R3} and \ref{R4}, each of $v_{2}$ and $v_{3}$ sends at least $-2 + \frac{8}{7} \times 2 = \frac{2}{7}$ to $v$. Then $\omega'(v) \geq -2 + \frac{8}{7} + \mu + \frac{2}{7} \times 2 > 0$. 
         \item \textbf{Suppose that one of $v_{2}$ and $v_{3}$, say $v_{2}$, is a $5^{+}$-vertex}. If $v_{1}v_{3}, v_{1}v_{4} \in E(G)$, then $G[\{v, v_{1}, v_{2}, v_{3}, v_{4}\}]$ contains a $K_{5}^{-}$, a contradiction. If exactly one of $v_{1}v_{3}$ and $v_{1}v_{4}$ is an edge, then there is an induced subgraph isomorphic to the configuration in \cref{Kite}, a contradiction. It follows that $v_{1}v_{3}, v_{1}v_{4} \notin E(G)$. Similar to the above cases, we can prove that each of $v_{1}v_{2}, v_{2}v_{3}$ and $v_{3}v_{4}$ is incident with a $7^{+}$-face. Moreover, $v_{3}$ is adjacent to exactly one bad $4$-vertex, and each of $v_{1}$ and $v_{4}$ is adjacent to at most two bad $4$-vertices. By \ref{R3} and \ref{R4}, $v_{3}$ sends at least $-2 + \frac{8}{7} \times 2 = \frac{2}{7}$ to $v$, and each of $v_{1}$ and $v_{4}$ sends at least $(-2 + \frac{8}{7} \times 2)/2 = \frac{1}{7}$ to $v$. Then $\omega'(v) \geq -2 + \frac{8}{7} + \mu + \frac{2}{7} + \frac{1}{7} \times 2 = \frac{2}{7} - \epsilon > 0$. 
         \end{itemize}
    \end{itemize}
    
In what follows, we assume that $v$ is a good vertex. Then $v$ is incident with at most two $3$-faces. By \cref{Lem1}\ref{ONEW}, each $4$-face is incident with at most one special $4$-vertex. By \ref{R1}, each incident $4$-face sends at least $\frac{1}{3}$ to $v$. By \ref{R2}, each incident $5$-face sends at least $\frac{4}{5}$ to $v$. By \ref{R3}, each incident $6^{+}$-face sends at least $1$ to $v$. Let $\omega^{*}(v)$ be the charge of $v$ by only applying \ref{R1}, \ref{R2}, \ref{R3} and \ref{R7}. By \ref{R4}, it suffices to prove $\omega^{*}(v) \geq 0$.

\begin{itemize}
\item $\bm{m_{3}(v) = 0}$. By \cref{Lem1}\ref{444dots}, $v$ is incident with at least two $5^{+}$-faces. By \ref{R2} and \ref{R3}, each incident $5^{+}$-face sends at least $\frac{4}{5}$ to $v$. Recall that each incident $4$-face sends at least $\frac{1}{3}$ to $v$. Obviously, $v$ is not adjacent to any bad $4$-vertex. Then $\omega^{*}(v) \geq -2 + \frac{4}{5} \times 2 + \frac{1}{3} \times 2 - \frac{1}{20} \times 4 = \frac{1}{15} > 0$. 

\item $\bm{m_{3}(v) = 2}$. If the two $3$-faces are not adjacent in $G$, then $v$ is incident with two $6^{+}$-faces by \cref{Lem1}\ref{343dots} and \ref{353dots}, thus 
\begin{equation}\label{EQ2}
\omega^{*}(v) \geq -2 + 1 \times 2 = 0.
\end{equation} Assume that $v$ is incident with two adjacent $3$-faces, say $f_{1}$ and $f_{2}$. Note that $v$ is not a special vertex. By \cref{Lem1}\ref{3346P}, $v$ is incident with two $5^{+}$-faces. Suppose that $f_{3}$ is a $5$-face. By \cref{Lem1}\ref{335k}, $f_{4}$ is a $7^{+}$-face, and $f_{3}$ is incident with at least one $5^{+}$-vertex. By \ref{R2}, the $5$-face $f_{3}$ sends at least $\frac{4 - \frac{1}{2}}{4} = \frac{7}{8}$ to $v$. Hence, $\omega^{*}(v) \geq -2 + \frac{8}{7} + \frac{7}{8} = \frac{1}{56} > 0$. By symmetry, we may assume that both $f_{3}$ and $f_{4}$ are $6^{+}$-faces. Then 
\begin{equation}\label{EQ3}
\omega^{*}(v) \geq -2 + 1 \times 2 = 0.
\end{equation}

\item $\bm{m_{3}(v) = 1}$. If $v$ is incident with three $5^{+}$-faces, then it receives at least $\frac{4}{5}$ from each incident $5^{+}$-face, and then $\omega^{*}(v) \geq -2 + \frac{4}{5} \times 3 = \frac{2}{5} > 0$. If $v$ is incident with at least two $6^{+}$-faces, then $\omega^{*}(v) \geq -2 + 1 \times 2 + \frac{1}{3} > 0$. So we may assume that $v$ is incident with a $4$-face and at most one $6^{+}$-face. Without loss of generality, let $f_{1}$ be a $3$-face. 
    \begin{itemize}
    \item Assume that $v$ is a $(3, 4, 5^{+}, 4)$-vertex. By \cref{Lem1}\ref{345P4}, none of $f_{2}$ and $f_{4}$ is incident with a special $4$-vertex. If each of $f_{2}$ and $f_{4}$ sends at least $\frac{2 - \eta}{3}$ to $v$, then $\omega^{*}(v) \geq -2 + \frac{2 - \eta}{3} \times 2 + \frac{4}{5} = \frac{2 - 10\eta}{15} > 0$. So we may assume that $f_{2}$ or $f_{4}$, say $f_{2}$, sends less than $\frac{2 - \eta}{3}$ to $v$. By \ref{R1}, $f_{2}$ is incident with four $4$-vertices, and it sends $\frac{1}{2}$ to $v$. By \cref{Lem1}\ref{3FVF4}, $f_{1}$ and $f_{2}$ are normally adjacent. By \cref{3CVC4}, we have $d(v_{1}) \geq 5$. By \ref{R1}, $f_{4}$ sends at least $\frac{2 - \eta}{3}$ to $v$. If $f_{3}$ is a $6^{+}$-face, then $\omega^{*}(v) \geq -2 + \frac{1}{2} + 1 + \frac{2 - \eta}{3} = \frac{1 - 2 \eta}{6} > 0$. So we may further assume that $f_{3}$ is a $5$-face. This means that $v$ is a $(3, 4, 5, 4)$-vertex. By \cref{Lem1}\ref{3454}, $v_{3}$ is a $(4^{+}, 4^{+}, 4^{+}, 4^{+})$-vertex. Note that a $(4^{+}, 4^{+}, 4^{+}, 4^{+})$-vertex is not adjacent to a bad $4$-vertex. By \ref{R7}, $v_{3}$ sends $\frac{1}{20}$ to $v$. Then $\omega^{*}(v) \geq -2 + \frac{1}{2} + \frac{2 - \eta}{3} + \frac{4}{5} + \frac{1}{20} = \frac{1 - 20\eta}{60}  > 0$. 
     
    \item Assume that $f_{3}$ is a $5^{+}$-face, and one of $f_{2}$ and $f_{4}$ is a $5^{+}$-face. Recall that $v$ is incident with a $4$-face, we may assume that $f_{2}$ is a $4$-face. If $f_{2}$ sends at least $\frac{1- \eta}{2}$ to $v$, then $\omega^{*}(v) \geq -2 + \frac{1 - \eta}{2} + \frac{4}{5} \times 2 = \frac{1 - 5\eta}{10} > 0$. So we may assume that $f_{2}$ is incident with four $4$-vertices, and one of which is a special $4$-vertex. Since there is no subgraph isomorphic to the configuration in \cref{House}, we have that $d(v_{1}) \geq 5$. By the discharging rules, $f_{2}$ sends at least $\frac{1}{3}$ to $v$, and $f_{3}$ sends at least $\frac{4}{5}$ to $v$, and $f_{4}$ sends at least $\frac{7}{8}$ to $v$. Then $\omega^{*}(v) \geq -2 + \frac{1}{3} + \frac{4}{5} + \frac{7}{8} > 0$. 
    
    \item Assume that $f_{3}$ is a $4$-face. Since $v$ is incident with at most one $6^{+}$-face, \cref{Lem1}\ref{3k4l} implies that $v$ is a $(3, 4, 4, 6^{+})$-vertex, and none of the two incident $4$-faces is incident with a special $4$-vertex. By the discharging rules, each incident $4$-face sends at least $\frac{1}{2}$ to $v$, and the $6^{+}$-face sends at least $1$ to $v$. Then \begin{equation}\label{EQ4}
    \omega^{*}(v) \geq -2 + \frac{1}{2} \times 2 + 1 = 0. 
\end{equation}
    \end{itemize}
\end{itemize}

We have proved that $\omega'(x) \geq 0$ for all $x \in V(G) \cup F(G)$, and every $5^{+}$-vertex has a positive final charge. Next, we show that there is a vertex $v$ such that $\omega'(v) > 0$. Suppose that $\omega'(v) = 0$ for all $v \in V(G)$. Then $G$ is $4$-regular, and no vertex is a bad $4$-vertex. This implies that $\omega'(v) = \omega^{*}(v)$ for all $v \in V(G)$, and no vertex is a $(4^{+}, 4^{+}, 4^{+}, 4^{+})$-vertex. By the above discussion, a $4$-vertex may have final charge zero only in the four labeled equations. 

In \eqref{EQ1}, $\omega'(v) = 0$ implies that $v$ is a $(3, 3, 4, 6^{+})$-vertex, then there is a subgraph isomorphic to the configuration in \cref{House}, this contradicts \cref{3CVC4}. 

In \eqref{EQ2}, $\omega'(v) = 0$ implies that $v$ is a $(3, 6, 3, 6)$-vertex. Since $v$ is a $4$-vertex, it is incident with each incident $6$-face once, then $v_{1}v_{2}v_{3}v_{4}v_{1}$ is a $4$-cycle. Since $G[\{v, v_{1}, v_{2}, v_{3}, v_{4}\}]$ contains no $K_{5}^{-}$, we have that $G[\{v, v_{1}, v_{2}, v_{3}, v_{4}\}]$ is an induced wheel, and there is a subgraph isomorphic to the configuration in \cref{Kite} (note that $G$ is $4$-regular), a contradiction. 

In \eqref{EQ3}, $\omega'(v) = 0$ implies that $v$ is a $(3, 3, 6, 6)$-vertex. Similar to the above, $G[\{v, v_{1}, v_{2}, v_{3}, v_{4}\}]$ is an induced wheel, and there is a subgraph isomorphic to the configuration in \cref{Kite}, a contradiction. 

In \eqref{EQ4}, $\omega'(v) = 0$ implies that $v$ is a $(3, 4, 4, 6)$-vertex. Then there is a subgraph isomorphic to the configuration in \cref{House}, a contradiction. 

Therefore, there is a vertex having positive final charge, this completes the proof.

\section{Proof of \cref{WD}}
\label{sec4}
To prove the main result stated in \cref{WD}, we need the following lemma. A \emph{GDP-tree} is a connected graph in which every block is either a cycle or a complete graph. 
\begin{lemma}[{\cite[Lemma 5.5]{MR4606413}}]\label{Gallai-Weak}
Assume $G$ is a graph which is not weakly $(h-1)$-degenerate. Let $U \subseteq \{u \in V(G) : d(v) = h(v)\}$. If $G - U$ is weakly $(h-1)$-degenerate, then every component of $G[U]$ is a GDP-tree.
\end{lemma}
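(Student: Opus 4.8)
The plan is to argue by contradiction in the spirit of Gallai's classical theorem on the low-vertex subgraph of a colour-critical graph, transported to the weak-degeneracy setting. Write $f = h-1$ and $W = V(G)\setminus U$, and note that every $u\in U$ satisfies $f(u)=d_G(u)-1$, so each such $u$ is ``one unit short'' of being removable by a bare \textsf{Delete}. Suppose, for contradiction, that some component $C$ of $G[U]$ is \emph{not} a GDP-tree; I will produce a legal sequence of \textsf{Delete}/\textsf{DeleteSave} operations emptying all of $G$, contradicting that $G$ is not weakly $f$-degenerate.

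The engine I would isolate first is a purely local statement: \emph{if $H$ is connected and $g(v)\ge d_H(v)-1$ for every $v\in V(H)$, then $H$ is weakly $g$-degenerate unless $g\equiv d_H-1$ and $H$ is a GDP-tree.} I would prove this by induction on $|V(H)|$ through the block tree. If some $g(v)\ge d_H(v)$ (a unit of slack is present), that slack can be pushed along a spanning tree so that vertices are peeled from the ``far'' side, each losing at most its current degree and hence staying nonnegative; the only configurations where one would otherwise be trapped ($K_2$ with $g\equiv 0$, or more generally a clique or cycle with $g= d-1$) are avoided precisely because of the extra unit. If instead $g\equiv d_H-1$ but $H$ is not a GDP-tree, then $H$ has a block $B$ that is $2$-connected and is neither a complete graph nor a cycle. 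Here I would invoke the standard structural fact (as in the proof of Brooks' theorem) that such a $B$ contains two non-adjacent vertices $x,y$ sharing a common neighbour with $B-\{x,y\}$ connected. Deleting $x$ and then $y$ by \textsf{Delete} is legal and leaves $B-\{x,y\}$ connected with a vertex now carrying a unit of slack, reducing to the slack case above; threading this through the block tree (peeling leaf blocks toward a chosen root block $B$) gives a full legal elimination of $H$. I would also record a \emph{savings refinement}: when $H$ is not a GDP-tree, the elimination can be arranged so that one prescribed boundary \textsf{Delete} is upgraded to a \textsf{DeleteSave}, i.e.\ one extra unit of budget can be exported to a prescribed neighbour.

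With the engine in hand, the global step is a careful accounting at the $U$--$W$ interface. Using that $G-U$ is weakly $(h-1)$-degenerate, I would first peel off $W$; since the $U$-vertices are never the saved vertices in this phase, each $u\in U$ ends it with residual budget $f(u)-|N_G(u)\cap W| = d_{G[U]}(u)-1$, so every component of $G[U]$ inherits exactly the ``$g=d-1$'' budget to which the engine applies. The non-GDP-tree component $C$ can then be dismantled outright by the engine; the substantive work is to guarantee that the remaining (possibly GDP-tree) components of $G[U]$, together with the vertices of $W$, can also be cleared. This is where the savings refinement is spent: the extra unit liberated while dismantling $C$ is routed, via one \textsf{DeleteSave} across the boundary, to wherever a clique-or-cycle block would otherwise trap the process, and the hypothesis that $G-U$ is weakly $(h-1)$-degenerate is what ensures such routing never overdraws a $W$-vertex. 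Concatenating the phases yields a legal elimination of $G$, the desired contradiction.

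The hard part, and the step I would spend the most care on, is this interface accounting rather than the engine itself. Tracking residual budgets so that no vertex ever goes negative — especially vertices isolated in $G[U]$ and vertices of $W$ adjacent to several vertices of $U$ — is delicate, because each \textsf{DeleteSave} protects only one neighbour while a vertex may need shielding from several deletions at once. The conceptual point that organizes the bookkeeping is exactly the dichotomy that \cref{Gallai-Weak} asserts: a block that is a clique or a cycle offers no internal slack and must be fed a unit from outside, whereas any other $2$-connected block manufactures the needed unit internally; GDP-trees are precisely the connected graphs built entirely from the former, hence precisely the obstructions that persist.
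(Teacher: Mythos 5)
The paper itself offers no proof of \cref{Gallai-Weak}: it is imported verbatim from \cite[Lemma 5.5]{MR4606413}, so the benchmark is Bernshteyn and Lee's argument, whose architecture you reproduce correctly — peel $W = V(G)\setminus U$ using the hypothesis on $G-U$, observe that each $u \in U$ then retains budget $d_{G[U]}(u)-1$, and apply a Gallai/Erd\H{o}s--Rubin--Taylor-type dichotomy to the components of $G[U]$. But your ``engine'' contains a concrete error at its key step: after \textsf{Delete}$(x)$ followed by \textsf{Delete}$(y)$, the common neighbour $z$ has budget $(d(z)-1)-2 = d'(z)-1$, where $d'(z)=d(z)-2$ is its new degree — it sits exactly at the critical level again, and in fact every remaining vertex sits at $d'-1$, so the two plain deletions create no slack and make no progress whatsoever. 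The Brooks-style gain of ``colouring $x$ and $y$ alike'' is realized only through \textsf{DeleteSave}: after \textsf{Delete}$(x)$ one has $f(y)=d(y)-1$ and $f(z)=d(z)-2$ (this uses $xy\notin E$), so \textsf{DeleteSave}$(y,z)$ is legal provided $d(y)\ge d(z)$ and leaves $z$ with $f(z)=d'(z)$, a genuine root for your tree-peeling phase; an edge whose endpoints have unequal degrees is handled instead by an immediate \textsf{DeleteSave} along it, so the path trick is needed only in the locally regular situation where $d(y)=d(z)$ makes the save legal. This is repairable, but as written the engine does not prove the dichotomy.

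The interface step you single out as ``the hard part'' is not merely delicate — it is impossible, because with ``every component'' the lemma is false as transcribed. Take $G$ to be $K_{4}$ minus the edge $ab$ on $\{a,b,c,d\}$ together with a path $a\,w\,z$; put $h(u)=d_{G}(u)$ on $U=\{a,b,c,d,z\}$ and $h(w)=1$ (recall $U$ need only be a subset of the degree-critical vertices). Then $f=h-1$ gives $f(z)=f(w)=0$, and the adjacent pair $z,w$ can never be eliminated: a \textsf{Delete} of either while the other is present drives it to $-1$, a \textsf{DeleteSave} between them requires a strict inequality, and \textsf{DeleteSave}$(w,a)$ still decrements $z$. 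Hence $G$ is not weakly $(h-1)$-degenerate, while $G-U$ is the isolated vertex $w$, which is; yet the component $G[\{a,b,c,d\}]\cong K_{5}^{-}$-style $K_{4}^{-}$ of $G[U]$ is not a GDP-tree. What is true — and is all this paper uses, since there $U=V(\Gamma)$ induces a connected graph — is the contrapositive of the ``some component'' version: if no component of $G[U]$ were a GDP-tree, then after peeling $W$ every component carries $g\ge d-1$ and is emptied by the (repaired) engine, so $G$ would be weakly $(h-1)$-degenerate; thus some component is a GDP-tree, and when $G[U]$ is connected that component is $G[U]$ itself. Your scheme of exporting one saved unit from the non-GDP-tree component $C$ cannot rescue GDP-tree components: once $W$ is gone they are isolated at exactly the critical budget, one unit cannot serve several of them, and in the example above the trapped pair has budget $0$ from the outset, so no routing across the boundary ever helps. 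Dropping the routing apparatus, proving the connected/``some component'' form, and fixing the engine as indicated yields the standard proof.
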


\begin{proof}[Proof of \cref{WD}]
Assume that $G$ is a minimum counterexample to \cref{WD}, \ie $G$ is not weakly $3$-degenerate but every proper subgraph is weakly $3$-degenerate. Since $G$ is a minimum counterexample, it must be connected, and its minimum degree is at least four. Suppose that $G$ contains an induced subgraph isomorphic to the configuration in \cref{K5--}. Note that the induced subgraph is a $2$-connected subgraph that is neither a complete graph $K_{5}$ nor a cycle. This contradicts \cref{Gallai-Weak}, thus there is no induced subgraph isomorphic to the configuration in \cref{K5--}. Similarly, we can prove that there is no induced subgraph isomorphic to the configuration in \cref{Kite} or \cref{House}. This contradicts \cref{SR}.
\end{proof}

\section{Proof of \cref{AT4}}
\label{sec5}
To prove \cref{AT4}, we need the following lemma. 
\begin{lemma}[Lu \etal \cite{MR4051856}]\label{L}
Let $D$ be a digraph with $V(D) = X_{1} \cup X_{2}$ and $X_{1} \cap X_{2} = \emptyset$. If all arcs between $X_{1}$ and $X_{2}$ are oriented from $X_{1}$ to $X_{2}$, then $D$ is an AT-orientation if and only if both $D[X_{1}]$ and $D[X_{2}]$ are AT-orientations.  
\end{lemma}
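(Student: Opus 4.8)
The plan is to reduce the biconditional to a single multiplicative identity for the Alon--Tarsi difference, namely
\[
\mathrm{diff}(D) = \mathrm{diff}(D[X_{1}]) \cdot \mathrm{diff}(D[X_{2}]).
\]
Once this identity is in hand the lemma is immediate: since we are working over the integers, the product is nonzero if and only if both factors are nonzero, so $\mathrm{diff}(D) \neq 0$ exactly when both $\mathrm{diff}(D[X_{1}]) \neq 0$ and $\mathrm{diff}(D[X_{2}]) \neq 0$, which is precisely the assertion that $D$ is an AT-orientation if and only if both $D[X_{1}]$ and $D[X_{2}]$ are.

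The heart of the argument is a structural observation about spanning Eulerian sub-digraphs. Let $H$ be any spanning Eulerian sub-digraph of $D$. Summing the balance condition $d^{+}_{H}(v) = d^{-}_{H}(v)$ over all $v \in X_{1}$, the arcs of $H$ lying entirely inside $X_{1}$ contribute equally to $\sum d^{+}_{H}$ and $\sum d^{-}_{H}$ and cancel, leaving that the number of arcs of $H$ directed from $X_{1}$ to $X_{2}$ equals the number directed from $X_{2}$ to $X_{1}$. By hypothesis every arc of $D$ between the two parts points from $X_{1}$ to $X_{2}$, so $H$ has no arc from $X_{2}$ to $X_{1}$ whatsoever; hence by the equality just derived $H$ also has no arc from $X_{1}$ to $X_{2}$. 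In short, the Eulerian condition together with the one-way orientation of the cut forces $H$ to contain no crossing arc at all.

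Consequently every spanning Eulerian $H$ decomposes as a disjoint union $H = H_{1} \cup H_{2}$, where $H_{i} \coloneqq H[X_{i}]$ is a spanning Eulerian sub-digraph of $D[X_{i}]$; conversely any pair $(H_{1}, H_{2})$ of spanning Eulerian sub-digraphs of $D[X_{1}]$ and $D[X_{2}]$ reassembles into a spanning Eulerian sub-digraph of $D$. This gives a bijection between the spanning Eulerian sub-digraphs of $D$ and the product of the corresponding families for $D[X_{1}]$ and $D[X_{2}]$, under which the arc count is additive, $|A(H)| = |A(H_{1})| + |A(H_{2})|$. Writing $\mathrm{diff}(D) = \sum_{H} (-1)^{|A(H)|}$ as a signed sum over spanning Eulerian $H$, the bijection and additivity factor this sum as $\left(\sum_{H_{1}} (-1)^{|A(H_{1})|}\right)\!\left(\sum_{H_{2}} (-1)^{|A(H_{2})|}\right)$, which is exactly $\mathrm{diff}(D[X_{1}]) \cdot \mathrm{diff}(D[X_{2}])$, establishing the identity.

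The only nonroutine step is the degree-sum argument of the second paragraph, showing that a balanced sub-digraph cannot use a cut that is oriented entirely in one direction; everything afterward—the decomposition into a product family and the factorization of the signed count—is straightforward bookkeeping. I would therefore expect the write-up to spend essentially all its substance on that single observation and to dispatch the remaining factorization in a line or two.
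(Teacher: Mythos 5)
Your proof is correct: the degree-sum argument over $X_{1}$ does force every spanning Eulerian sub-digraph to avoid the cut entirely, and the resulting bijection yields $\mathrm{diff}(D) = \mathrm{diff}(D[X_{1}]) \cdot \mathrm{diff}(D[X_{2}])$, from which the lemma follows. The paper itself gives no proof, quoting the lemma from Lu and Zhu \cite{MR4051856}, and your argument is essentially the one in that cited source, so there is nothing to reconcile.
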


\begin{proof}[Proof of \cref{AT4}]
Assume that $G$ is a counterexample to \cref{AT4} with $|V(G)|$ as small as possible. By \cref{SR}, there is an induced subgraph $\Gamma$ isomorphic to a configuration as described in \cref{SR}. By the minimality, $AT(G - V(\Gamma)) \leq 4$. By the definition of Alon-Tarsi number, there exists an AT-orientation $D'$ of $G - V(\Gamma)$ with maximum out-degree at most three. We can extend $D'$ to an AT-orientation $D$ of $G$ with maximum out-degree at most three. Firstly, we orient all the edges between $V(\Gamma)$ and $\overline{V(\Gamma)}$ from $V(\Gamma)$ to $\overline{V(\Gamma)}$. 

Suppose that $\Gamma$ is a vertex of degree at most three in $G$. The obtained orientation is an orientation $D$ of $G$. Then $\mathrm{diff}(D) = \mathrm{diff}(D') \neq 0$, and the maximum out-degree is at most three. Therefore, $AT(G) \leq 4$, a contradiction.

Suppose that $\Gamma$ is isomorphic to the configuration in \cref{K5--}, \cref{Kite} or \cref{House}. We orient all the edges in $\Gamma$ as depicted in \cref{Oriented}. Note that the orientation of $\Gamma$ is an AT-orientation, and the maximum out-degree of the orientation $D$ of $G$ is at most three. By \cref{L}, $D$ is an AT-orientation, and $AT(G) \leq 4$, a contradiction. 

This completes the proof of \cref{AT4}.
\end{proof}

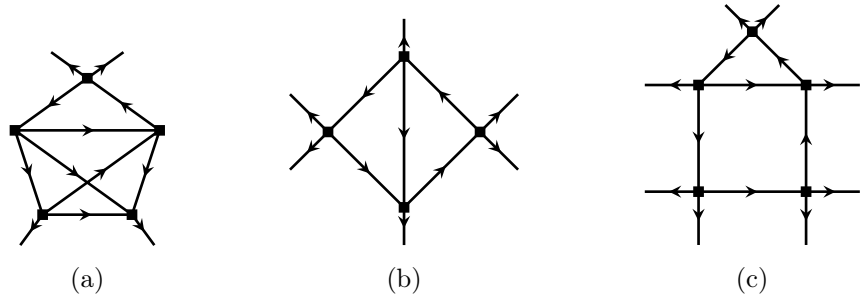
\begin{figure}%
\centering
\def\s{1}
\subcaptionbox{\label{ATA}}
{\begin{tikzpicture}[line width = 1pt]
\foreach \ang in {1, 2, 3, 4, 5}
{
\def\pointname{v\ang}
\coordinate (\pointname) at ($(\ang*360/5-54:\s)$);}

\path [draw=black, postaction={on each segment={mid arrow=black}}]
(v1)--(v2)--(v3)--(v4)--cycle
(v3)--(v1)
(v1)--(v5)
(v3)--(v5)
(v4)--(v5)
(v2)--($(v2)!0.5!180:(v1)$)
(v2)--($(v2)!0.5!-180:(v3)$)
(v4)--($1.5*(v4)$)
(v5)--($1.5*(v5)$)
;
\foreach \ang in {1, 2, 3, 4, 5}
{
\node[rectangle, inner sep = 1.5, fill = black, draw] () at (v\ang) {};
}
\end{tikzpicture}}\hspace{1.5cm}
\subcaptionbox{\label{ATB}}
{\begin{tikzpicture}[line width = 1pt]%
\coordinate (E) at (\s, 0);
\coordinate (N) at (0, \s);
\coordinate (W) at (-\s, 0);
\coordinate (S) at (0, -\s);

\path [draw=black, postaction={on each segment={mid arrow=black}}]
(N)--(W)--(S)--(E)--cycle
(N)--(S)
(W)--($(W)!0.5!180:(S)$)
(W)--($(W)!0.5!180:(N)$)
(E)--($(E)!0.5!180:(S)$)
(E)--($(E)!0.5!180:(N)$)
(N)--($1.5*(N)$)
(S)--($1.5*(S)$)
;
\node[rectangle, inner sep = 1.5, fill, draw] () at (E) {};
\node[rectangle, inner sep = 1.5, fill, draw] () at (N) {};
\node[rectangle, inner sep = 1.5, fill, draw] () at (W) {};
\node[rectangle, inner sep = 1.5, fill, draw] () at (S) {};
\end{tikzpicture}}\hspace{1.5cm}
\subcaptionbox{\label{ATC}}
{\begin{tikzpicture}[line width = 1pt]
\coordinate (A) at (45:\s);
\coordinate (B) at (135:\s);
\coordinate (C) at (225:\s);
\coordinate (D) at (-45:\s);
\coordinate (H) at (90:1.414*\s);

\path [draw=black, postaction={on each segment={mid arrow=black}}]
(A)--(H)--(B)--(C)--(D)--cycle
(B)--(A)
(A)--($(A)!0.5!180:(B)$)
(B)--($(B)!0.5!180:(A)$)
(C)--($(C)!0.5!180:(B)$)
(C)--($(C)!0.5!180:(D)$)
(D)--($(D)!0.5!180:(A)$)
(D)--($(D)!0.5!180:(C)$)
(H)--($(H)!0.5!180:(A)$)
(H)--($(H)!0.5!180:(B)$)
;

\node[rectangle, inner sep = 1.5, fill, draw] () at (A) {};
\node[rectangle, inner sep = 1.5, fill, draw] () at (B) {};
\node[rectangle, inner sep = 1.5, fill, draw] () at (C) {};
\node[rectangle, inner sep = 1.5, fill, draw] () at (D) {};
\node[rectangle, inner sep = 1.5, fill, draw] () at (H) {};
\end{tikzpicture}}
\caption{Orientations.}
\label{Oriented}
\end{figure}

\section{Concluding remark}
\label{sec6}
In \cite{MR3634481}, Choi provided two infinite families of toroidal graphs. One of them has no $K_{5}^{-}$ but has chromatic number $5$, while the other has no $6$-cycle but has chromatic number $5$. It is worth to note that $\textsf{ch}(G) \leq \textsf{wd}(G) + 1$ and $\textsf{ch}(G) \leq AT(G)$. The two families of toroidal graphs show that they are not weakly $3$-degenerate, and have Alon-Tarsi number greater than four. Then the conditions in \cref{WD,AT4} are sharp in some sense.

\cref{SR} can be applied to other graph parameters as well. A strictly $f$-degenerate transversal is a common generalization of DP-coloring and $L$-forested coloring. For the definitions of DP-coloring, $L$-forested coloring, and strictly $f$-degenerate transversal, we refer the reader to \cite{MR4357325}. Since the four configurations described in \cref{SR} are reducible configurations for the following result, we can easily obtain it as \cref{WD,AT4}.

\begin{theorem}
Let $G$ be a toroidal graph without $K_{5}^{-}$ and $6$-cycles. Let $H$ be a cover of $G$ and $f$ be a function from $V(H)$ to $\{0, 1, 2\}$. If $f(v, 1) + f(v, 2) + \dots + f(v, s) \geq 4$ for each $v \in V(G)$, then $H$ has a strictly $f$-degenerate transversal.  
\end{theorem}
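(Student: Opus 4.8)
The plan is to run a minimal-counterexample argument that mirrors the proofs of \cref{WD} and \cref{AT4}, exploiting that the four configurations of \cref{SR} are reducible for strictly $f$-degenerate transversals as well. I would take a triple $(G, H, f)$ with $G$ toroidal, $K_{5}^{-}$-free and $6$-cycle-free, with $\sum_i f(v,i) \ge 4$ for every $v$, with no strictly $f$-degenerate transversal, and with $|V(G)|$ minimum. A minimum counterexample is connected (transversals of distinct components concatenate), so \cref{SR} applies and supplies one of the four configurations. Every reduction has the same shape: delete the vertices involved, apply minimality to the induced subcover on the remainder to obtain a strictly $f$-degenerate transversal $R'$, and then extend $R'$. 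The bookkeeping is carried out through the single update $f'(v,i) = f(v,i) - |N_H((v,i)) \cap R'|$; since the $H$-edges between any two parts $V_u, V_v$ form a matching, each neighbor of $v$ lying outside the deleted set subtracts at most one unit from $\sum_i f'(v,i)$, and copies with $f'(v,i) < 0$ may be discarded without lowering $\sum_i f'(v,i)$.

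For the low-degree configuration, let $v$ satisfy $d_G(v) \le 3$ and delete it. At most $d_G(v) \le 3$ matching edges reach $R'$ from $V_v$, so $\sum_i \big(f(v,i) - |N_H((v,i)) \cap R'|\big) \ge 4 - 3 \ge 1$; as these summands are integers, some copy $(v,i^\ast)$ satisfies $f(v,i^\ast) > |N_H((v,i^\ast)) \cap R'|$. Placing $(v,i^\ast)$ first and then following the order that witnesses that $R'$ is strictly $f$-degenerate yields a strictly $f$-degenerate transversal of $G$, a contradiction. This also records the combination principle reused below: if a transversal $R_\Gamma$ of a deleted configuration $\Gamma$ is strictly $f'$-degenerate, I eliminate $R_\Gamma$ in its own order before $R'$, and when a copy $(v,i) \in R_\Gamma$ is removed, its still-present neighbors inside $R_\Gamma$ number fewer than $f'(v,i)$ while it has $|N_H((v,i)) \cap R'|$ neighbors in $R'$, a total below $f'(v,i) + |N_H((v,i)) \cap R'| = f(v,i)$; hence $R' \cup R_\Gamma$ is strictly $f$-degenerate.

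For the configurations of \cref{K5--}, \cref{Kite}, and \cref{House}, every vertex of $\Gamma$ is a $4$-vertex of $G$, so $v \in V(\Gamma)$ has exactly $4 - d_\Gamma(v)$ neighbors outside $\Gamma$, whence $\sum_i f'(v,i) \ge 4 - (4 - d_\Gamma(v)) = d_\Gamma(v)$. I would then invoke the Gallai-type (Brooks-type) theorem for strictly $f$-degenerate transversals from \cite{MR4357325}: a connected cover with $\sum_i f'(v,i) \ge d_\Gamma(v)$ for all $v$ admits a strictly $f'$-degenerate transversal unless every block of $\Gamma$ is a complete graph or a cycle. The $K_{5}^{-}$, the kite ($K_4$ minus an edge), and the house (a $5$-cycle with one chord) are each $2$-connected and are neither complete graphs nor cycles, so none is such a Gallai tree; the theorem therefore produces $R_\Gamma$, and the combination principle turns $R' \cup R_\Gamma$ into a strictly $f$-degenerate transversal of $G$, the final contradiction.

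The edge-counting and the checks that the three configurations are not Gallai trees are routine. The step that demands care is the interface between $R'$ and $R_\Gamma$: confirming that $f'$ is exactly the budget consumed by the matching edges into $R'$, and that discarding copies with $f'(v,i) < 0$ preserves the hypothesis $\sum_i f'(v,i) \ge d_\Gamma(v)$ needed to apply the Gallai-type theorem. I expect the main obstacle to be citing and applying the correct form of that theorem for covers rather than for ordinary graphs, since its statement for complete-graph and cycle blocks carries extra conditions on the cover; the saving grace is that our configurations avoid such blocks entirely, so the plainest version of the theorem suffices.
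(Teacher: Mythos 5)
Your proposal is correct and matches the paper's (only sketched) argument: the paper derives this theorem exactly as \cref{WD} and \cref{AT4}, via a minimal counterexample, \cref{SR}, and the reducibility of the four configurations, where the reduction for the three all-$4$-vertex configurations rests on the Gallai-type theorem for strictly $f$-degenerate transversals from \cite{MR4357325} (the analogue of \cref{Gallai-Weak}), precisely as you invoke it. Your residual-function bookkeeping, the observation that $\sum_i f'(v,i) \geq d_\Gamma(v)$ since each vertex of $\Gamma$ is a $4$-vertex with the cover edges between parts forming matchings, and the check that none of the three configurations is a GDP-tree fill in the details the paper leaves implicit, along the same lines.
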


As a consequence, one can obtain the following result on list vertex arboricity.

\begin{corollary}[Zhu \etal \cite{MR4364836}]
Let $G$ be a toroidal graph without $K_{5}^{-}$ and $6$-cycles. Then the list vertex arboricity is at most two. 
\end{corollary}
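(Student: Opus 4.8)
The plan is to deduce the corollary directly from the preceding theorem on strictly $f$-degenerate transversals, specialized to the constant value $f\equiv 2$. Recall that $L$-forested coloring is precisely the instance of the transversal framework in which every part of the cover is assigned degeneracy budget $2$: a transversal is strictly $2$-degenerate exactly when the subgraph of the cover it induces has the property that every nonempty subgraph contains a vertex of degree at most one, that is, when it is a forest. Thus a strictly $f$-degenerate transversal with $f\equiv 2$ translates back into a coloring of $G$ in which every color class induces a forest.

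Concretely, I would first observe that it suffices to treat lists of size exactly two, since enlarging a list can only make a forested coloring easier to find; so fix a list assignment $L$ with $|L(v)|=2$ for every $v\in V(G)$. Next I would build the cover $H$ of $G$ used for forested coloring: the part over $v$ is $X_v=\{(v,c):c\in L(v)\}$, and for each edge $uv\in E(G)$ one joins $(u,c)$ to $(v,c)$ for the colors $c$ common to $L(u)$ and $L(v)$, so that monochromatic adjacencies in $G$ correspond to edges of $H$ inside a transversal. Setting $f(v,c)=2$ for every $c\in L(v)$, the hypothesis of the theorem is met, since
\[
\sum_{c\in L(v)} f(v,c) = 2+2 = 4 \geq 4
\]
for every $v\in V(G)$.

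Applying the theorem then yields a strictly $f$-degenerate transversal $R$, which selects one vertex $(v,\phi(v))$ from each part and hence defines a coloring $\phi$ with $\phi(v)\in L(v)$. Because $f\equiv 2$, the induced subgraph $H[R]$ is strictly $2$-degenerate, that is, a forest; since distinct colors contribute no edges between their classes, this is equivalent to each color class of $\phi$ inducing a forest in $G$. Hence $\phi$ is an $L$-forested coloring, and as $L$ was an arbitrary $2$-list assignment, the list vertex arboricity of $G$ is at most two.

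Most of the real work is already packaged inside the theorem; the only delicate point is the translation step. The main obstacle I anticipate is aligning the definitions exactly: one must verify, using the conventions of \cite{MR4357325}, that the forested-coloring cover is set up so that ``strictly $2$-degenerate transversal'' coincides with ``every monochromatic class induces a forest,'' and that the transversal graph $H[R]$ records only same-color adjacencies. Once that correspondence is fixed, the corollary follows from the theorem at once, exactly as \cref{WD} and \cref{AT4} did.
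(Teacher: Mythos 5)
Your proposal is correct and matches the paper's intended derivation: the paper states this corollary as an immediate consequence of the preceding theorem on strictly $f$-degenerate transversals, obtained precisely by taking the forested-coloring cover with $f \equiv 2$ so that $f(v,1) + f(v,2) = 4 \geq 4$. Your careful unpacking of the translation between strictly $2$-degenerate transversals and $L$-forested colorings is exactly the (implicit) argument the paper relies on via the conventions of \cite{MR4357325}.
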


\vskip 0mm \vspace{0.3cm} \noindent{\bf Acknowledgments.} We thank the two anonymous referees for their valuable comments and constructive suggestions on the manuscript.  This research was supported by the Natural Science Foundation of Henan Province (No. 242300420238).

\end{document}